\newtheorem{teorema}{Theorem}[section]
\newtheorem{propo}[teorema]{Proposition}
\newtheorem{lema}[teorema]{Lemma}
\newtheorem{coro}[teorema]{Corollary}
\theoremstyle{definition}
\newtheorem{defin}[teorema]{Definition}
\theoremstyle{remark}
\newtheorem{nota}[teorema]{Remark}
\newtheorem{ejemplo}[teorema]{Example}
\newcommand{\be}{\begin{enumerate}}
\newcommand{\ee}{\end{enumerate}}
\newcommand{\bi}{\begin{itemize}}
\newcommand{\ei}{\end{itemize}}
\newcommand{\RR}{{\mathbb R}}
\newcommand{\NN}{{\mathbb N}}
\newcommand{\QQ}{{\mathbb Q}}
\newcommand{\CC}{{\mathbb C}}
\newcommand{\FF}{{\mathcal F}}
\newcommand{\OO}{{\mathcal O}}
\newcommand{\XX}{{\mathcal X}}
\newcommand{\om}{\omega}
\newcommand{\bx}{\mathbf{x}}
\newcommand{\bv}{{\mathbf v}}
\newcommand{\bo}{{\mathbf 0}}
\DeclareMathOperator{\Sing}{Sing}
\DeclareMathOperator{\codim}{codim}
\begin{document}

\title{On codimension one foliations with prescribed cuspidal separatrix}

\author{Percy Fern\'{a}ndez-S\'{a}nchez}
\address[Percy Fern\'{a}ndez S\'{a}nchez]{Dpto. Ciencias - Secci\'{o}n Matem\'{a}ticas \\ Pontificia Universidad Cat\'{o}lica del Per\'{u} \\ Av. Universitaria 1801 \\ San Miguel, Lima 32 \\ Per\'{u}.}
\email{pefernan@pucp.edu.pe}
\thanks{First author partially supported by the Pontificia Universidad Catolica del Per\'{u} project VRI-DGI 2013-0047.}

\author{Jorge Mozo Fern\'{a}ndez}
\address[Jorge Mozo Fern\'{a}ndez]{Dpto. Matem\'{a}tica Aplicada \\
Facultad de Ciencias \\ Campus Miguel Delibes \\
Universidad de Valladolid\\
Paseo de Bel\'{e}n, 7\\
47011 Valladolid\\
Spain.}
\email{jmozo@maf.uva.es}
\thanks{Second and third authors partially supported by the Spanish national project MTM2010-15471}

\author{Hern\'{a}n Neciosup}
\address[Hern\'{a}n Neciosup]{Dpto. An\'{a}lisis Matem\'{a}tico, \'{A}lgebra, Geometr\'{\i}a y Topolog\'{\i}a \\
Facultad de Ciencias \\ Campus Miguel Delibes \\
Universidad de Valladolid\\
Paseo de Bel\'{e}n, 7\\
47011 Valladolid\\
Spain.}
\email{hneciosup@agt.uva.es}
\thanks{Third author supported by a PhD grant of the program \emph{FPI-UVa - Ayudas UVa para la Formaci\'{o}n del Personal Investigador} (University of Valladolid)}



\date{\today}


\begin{abstract}
In this paper, we will construct a pre-normal form for germs of codimension one holomorphic foliation having a particular type of separatrix, of cuspidal type. We will also give a sufficient condition, in the quasi-homogeneous, three-dimensional case, for these foliations to be of generalized surface type.
\end{abstract}

\maketitle

\section{Preliminaries and statement of the results}

The objective of this paper is twofold: first, we will find a (pre)-normal form for holomorphic differential 1-forms defining local holomorphic foliations in $(\CC^{n+1},\bo )$,  with a fixed separatrix of a certain particular
 type, that we will describe later, and with a condition on the multiplicity at the origin. Second, we will give a sufficient condition in dimension three for some of these forms, that will be called cuspidal, in order they are of the generalized surface type (case in which the previous condition on the order is automatically verified).

The search of normal forms for germs of holomorphic vector
fields, or of holomorphic 1-forms, is carried out in several
papers, either treating directly this problem, or as a need  for
treating another problems, such as the formal, analytic or
topological classification of these objects. Let us define the main concepts that we will use throughout the paper, and a brief account of the main achievements in dimensions two and three, focusing in the case of codimension one foliations.

A germ of codimension one holomorphic foliation $\FF$ in $(\CC^n,\bo)$ is defined by a holomorphic 1-form $\omega$, satisfying the Frobenius integrability condition $\omega \wedge d\omega=0$, and such that its coefficients have no common factors. If $\omega = \sum\limits_
{i=1}^n a_i (\bx ) dx_i$, the \emph{singular set} of $\omega$, $\Sing (\omega )$, is the germ of analytic set defined by the zeros of the ideal $(a_i (\bx )_{1\leq i \leq n})$. Previous condition implies that $\Sing (\omega )$ has codimension at least two.

We will also consider meromorphic integrable 1-forms: these are 1-forms $\omega = \sum\limits_{i=1}^n a_i (\bx ) dx_i$, where $a_i (\bx )$ are germs of meromorphic functions. Consider a $(n-1)$-dimensional germ of analytic set, $S$, defined by an equation
 $(f=0)$, with $f\in \OO$ reduced (here and throughout the paper, $\OO=\OO_n$ will denote the ring $\CC \{ \bx \}$ of convergent power series in $n$ variables, where $n$ is usually omitted as there is no risk of confusion).

A meromorphic 1-form is called \emph{logarithmic} along $S$ if $f\omega$ and $fd\omega$ are holomorphic forms, or equivalently if $f\omega$ and $df \wedge \omega$ are holomorphic. The following result follows:
\begin{propo}[\cite{LN,Saito80}]
Let $\omega$ be a holomorphic 1-form, and $f$ as before. The following conditions are equivalent:
\be
\item There exist a holomorphic 2-form $\eta$ such that $\omega \wedge df= f\eta$.
\item $\omega /f$ is logarithmic along $S$.
\item There exist $g$, $h\in\OO$, and a holomorphic 1-form $\alpha$, such that $g\omega+hdf=f\alpha$, and moreover, $g$, $f$ have no common factors.
\ee
\end{propo}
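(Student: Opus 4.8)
The plan is to establish the equivalence by proving the two directions (3) $\Rightarrow$ (1) and (1) $\Rightarrow$ (3), together with the equivalence (1) $\Leftrightarrow$ (2), which is purely computational. Since both (1) and (3) are statements about the \emph{holomorphic} form $\omega$, while (2) concerns the meromorphic form $\omega/f$, I would begin with the translation between them.

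For (1) $\Leftrightarrow$ (2): by the definition of logarithmic form applied to $\omega/f$, this form is logarithmic along $S$ exactly when $f\cdot(\omega/f)=\omega$ and $f\cdot d(\omega/f)$ are holomorphic, the first being automatic. Computing $d(\omega/f)=f^{-1}d\omega-f^{-2}\,df\wedge\omega$ gives $f\cdot d(\omega/f)=d\omega-f^{-1}\,df\wedge\omega$. As $d\omega$ is holomorphic, this is holomorphic if and only if $f^{-1}\,df\wedge\omega$ is, i.e. if and only if $df\wedge\omega\in f\,\Omega^2$. Since $df\wedge\omega=-\omega\wedge df$, this is precisely the existence of a holomorphic $2$-form $\eta$ with $\omega\wedge df=f\eta$, which is (1).

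For (3) $\Rightarrow$ (1): wedging the identity $g\omega+h\,df=f\alpha$ on the right with $df$ and using $df\wedge df=0$ yields $g\,(\omega\wedge df)=f\,(\alpha\wedge df)$. Expanding both $2$-forms in the basis $\{dx_i\wedge dx_j\}$, each coefficient $c_{ij}$ of $\omega\wedge df$ satisfies $g\,c_{ij}=f\,d_{ij}$ in $\OO$. Because $\OO$ is a unique factorization domain and $g$, $f$ have no common factor, $f$ must divide every $c_{ij}$; hence $\omega\wedge df=f\eta$ with $\eta$ holomorphic, which is (1).

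The main point, and the step I expect to be the genuine obstacle, is (1) $\Rightarrow$ (3). The local model is transparent: at a smooth point of $S$ one may choose $f$ as a coordinate, say $f=x_1$, so that $df=dx_1$; writing $\omega=\sum a_i\,dx_i$, condition (1) forces $x_1\mid a_i$ for $i\geq 2$, whence $\omega=a_1\,df+f\beta$ with $\beta$ holomorphic, i.e. (3) holds locally with $g=1$. The difficulty is to globalize this presentation across the singular set $\Sing(S)=\{f=0,\,df=0\}$, which has codimension at least two because $f$ is reduced. This is exactly the setting governed by the de Rham--Saito division lemma: the codimension hypothesis on the zero locus of $df$ permits division up to multiplication by a holomorphic function $g$ that does not vanish identically on any component of $S$, equivalently with $\gcd(g,f)=1$. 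Applying this division machinery (the content borrowed from \cite{Saito80}) to the relation $\omega\wedge df=f\eta$ produces holomorphic $g$, $h$ and a holomorphic $1$-form $\alpha$ with $g\omega+h\,df=f\alpha$ and $\gcd(g,f)=1$, giving (3). The careful depth and codimension bookkeeping underlying this lemma is the technical heart of the argument, whereas the remaining implications are elementary.
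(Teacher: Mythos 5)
Your treatment of (1) $\Leftrightarrow$ (2) and of (3) $\Rightarrow$ (1) is correct and essentially identical to the paper's (wedge with $df$, then divide coefficient by coefficient using that $\OO$ is a UFD and that $f$, $g$ have no common factor). The genuine gap is in (1) $\Rightarrow$ (3), which you yourself single out as the heart of the proof: you do not prove it, you delegate it wholesale to a ``de Rham--Saito division lemma'' that is never stated precisely. This fails on two counts. First, it is not even clear how a division lemma would apply: the hypothesis here is $\omega\wedge df=f\eta$, not a relation $\omega\wedge df=0$ to which de Rham-type division applies directly, and the conclusion you want (a single $g$ coprime to $f$ with $g\omega+h\,df=f\alpha$) \emph{is} Saito's characterization of logarithmic forms, i.e.\ the very statement under proof. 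Second, and decisively in this context, the appeal is circular: the proposition is credited to \cite{LN,Saito80}, and the paper's declared reason for writing out a complete proof is that the argument in \cite{Saito80} is valid only for irreducible $f$ (and \cite{LN} assumes irreducibility). The reducible case is precisely what your invocation of ``division machinery'' hides; the smooth-point local model $f=x_1$ plus the word ``globalize'' does not address it, because the entire difficulty is to produce one relation on the germ at the origin, across $\Sing (S)$, with one function $g$ having no common factor with $f$.

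What the paper actually does at this step is elementary and avoids all codimension and depth bookkeeping. Writing $\omega=\sum_{i=1}^n a_i\,dx_i$, condition (1), read off on the coefficients of $dx_i\wedge dx_j$, gives $a_i f_{x_j}-a_j f_{x_i}=f g_{ij}$ for all $i,j$. If some partial derivative $f_{x_k}$ has no common factor with $f$, the explicit identity
$$
f_{x_k}\,\omega=a_k\,df+f\sum_{i=1}^n g_{ik}\,dx_i
$$
already is (3), with $g=f_{x_k}$, $h=-a_k$, $\alpha=\sum_{i=1}^n g_{ik}\,dx_i$. The reduction to this case uses only that $f$ is reduced: if every directional derivative $D_{\bv}f$, for $\bv$ ranging over $n$ linearly independent directions, had a common factor with $f$, then some factor $f'$ of $f$ would divide every partial derivative of $f$, which is impossible for reduced $f$; hence some $D_{\bv}f$ is coprime to $f$, and after a linear change of coordinates one may assume it is $f_{x_1}$. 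This short coprimality argument is exactly what your proposal is missing, and it is exactly the point where the reducible case --- the case your cited machinery does not cover --- gets handled.
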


The equivalence between (1) and (3) can be read in \cite{LN}, assuming irreducibility of $f$. The result is stated in \cite{Saito80} in a more general context ($q$-forms), without the irreducibility assumption, but the proof provided there is only valid in the irreducible case. Even if it is a well-known result, for the sake of completeness, we will write a complete proof here.

\begin{proof}
$df\wedge \dfrac{\omega}{f} = \eta$ if and only if $df\wedge \omega =f\eta$. This shows that (1) and (2) are equivalent.

Assuming (3), $g\omega\wedge df= f\cdot \alpha\wedge df$. As $f$, $g$ have no common factors, $f$ divides $\omega \wedge df$, so (1) follows. Conversely, assume first that, for some $k$, $f$ and $\dfrac{\partial f}{\partial x_k}$ have no common factors. If  $\omega = \sum\limits_{i=1}^n a_i (\bx ) dx_i$, (1) implies that there exist $g_{ij}(\bx )\in \OO$ such that
$$
a_i (\bx )\frac{\partial f}{\partial x_j} - a_j (\bx ) \frac{\partial f}{\partial x_i}= f g_{ij} (\bx),
$$
for every $1\leq i\leq j\leq n$. So,
$$
\frac{\partial f}{\partial x_k} \cdot \omega = \sum_{i=1}^n \frac{\partial f}{\partial x_k} \cdot a_i (\bx ) dx_i = a_k (\bx )\, df +f\sum_{i=1}^n g_{ik} (\bx ) dx_i,
$$
and (3) follows. This shows the result in the irreducible case. In the general case, there is a directional derivative $D_\bv f= \sum\limits_{i=1}^n v_i \dfrac{\partial f}{\partial x_i}$ ($\bv=(v_1,v_2,\ldots , v_n)$) without common factors with $f$. If it were not the case, there will exist $\bv_1,\bv_2,\ldots , \bv_n$ linearly independent, and a factor $f'$ of $f$, such that $f'$ divides each $D_{\bv_i} f$. So, $f'$ is a factor of every partial derivative of $f$, which is not possible as $f$ is reduced.
 So, if $D_\bv f$ has no common factors with $f$, a linear change of variables allows to assume that $\dfrac{\partial f}{\partial x_1}$ has no common factors with $f$, and the previous case applies.
\end{proof}

If $\FF$ is a holomorphic foliation defined by a 1-form $\omega$ satisfying the conditions of the previous Theorem, we will say that $S$ (or $f=0$) is a separatrix for $\FF$. Let us observe that, if $\Omega^1 (\log S)$ represents the set of meromorphic 1-
forms having $f$ as separatrix (i.e. logarithmic along $S$), $\Omega^1 (\log S)$ has a structure of $\OO$-module. Under some condition, this $\OO$-module is free. In fact, K. Saito proves in \cite{Saito80} the following result:

\begin{propo} \label{modulolibre}
$\Omega^1 (\log S)$ is a free $\OO$-module if and only if there exist $\omega_1,\omega_2,\ldots , \omega_n \in \Omega^1 (\log S)$ such that $\omega_1\wedge \cdots \wedge \omega_n= U(\bx ) \dfrac{dx_1\wedge \cdots \wedge dx_n}{f}$, where $U(\bx )$ is a unit in $\OO$.
\end{propo}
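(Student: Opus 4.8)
The plan is to prove the two implications separately, isolating as the technical core a divisibility property of wedge products of logarithmic forms. Throughout I write a form logarithmic along $S=(f=0)$ as $\omega=\frac{1}{f}\sum_j a_j\,dx_j$ with $a_j\in\OO$, the condition $df\wedge\omega$ holomorphic being, as in Proposition~1 of the excerpt, equivalent to $f\mid\bigl(a_i\frac{\partial f}{\partial x_j}-a_j\frac{\partial f}{\partial x_i}\bigr)$ for all $i,j$. First I would record the elementary sandwich $\Omega^1\subseteq\Omega^1(\log S)\subseteq\frac{1}{f}\Omega^1$: each $dx_j$ is logarithmic, and every logarithmic $\omega$ has $f\omega$ holomorphic. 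Since all three are torsion-free $\OO$-modules with the same fraction field, $\Omega^1(\log S)$ has generic rank $n$, so a free basis, should one exist, has exactly $n$ elements. The whole statement then rests on the following claim: if $\omega_1,\dots,\omega_n\in\Omega^1(\log S)$ with $f\omega_k=\sum_j a_{kj}\,dx_j$, then $f^{n-1}\mid\det(a_{kj})$, equivalently $\omega_1\wedge\cdots\wedge\omega_n=\frac{c}{f}\,dx_1\wedge\cdots\wedge dx_n$ for some $c\in\OO$.

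To prove the claim I would show that the exterior product of logarithmic $1$-forms is again logarithmic as a higher form, which reduces to a local divisibility statement. At a smooth point of $S$ one may take $f=x_1$ in suitable coordinates; the logarithmic condition then forces $x_1\mid a_{kj}$ for $j\neq 1$, and a direct expansion of the relevant minors shows that $x_1$ divides $\det(a_{kj})$ to order $n-1$. Consequently the meromorphic function $\det(a_{kj})/f^{n-1}$ is holomorphic off $S$ (where $f$ is a unit) and on the smooth locus of $S$, hence off $\Sing(f=0)$. Because $f$ is reduced this singular locus has codimension $\geq 2$, and since $\OO=\CC\{\bx\}$ is a UFD, hence normal, a Hartogs/Riemann-type extension upgrades holomorphy off a codimension $\geq 2$ set to holomorphy on all of $(\CC^n,\bo)$, yielding $f^{n-1}\mid\det(a_{kj})$.

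Granting the claim, the sufficiency direction is a Cramer computation. Given $\omega_1,\dots,\omega_n$ with $\omega_1\wedge\cdots\wedge\omega_n=U\frac{dx_1\wedge\cdots\wedge dx_n}{f}$ and $U$ a unit, they are $\OO$-independent since their wedge is nonzero. For arbitrary $\omega\in\Omega^1(\log S)$, Cramer's rule over the fraction field gives $\omega=\sum_i h_i\omega_i$ with $h_i=(\omega_1\wedge\cdots\wedge\omega\wedge\cdots\wedge\omega_n)/(\omega_1\wedge\cdots\wedge\omega_n)$, where $\omega$ sits in the $i$-th slot. By the claim the numerator equals $\frac{c_i}{f}\,dx_1\wedge\cdots\wedge dx_n$ with $c_i\in\OO$, so $h_i=c_i/U\in\OO$. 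Thus the $\omega_i$ generate $\Omega^1(\log S)$ and, being independent, form a free basis; hence $\Omega^1(\log S)$ is free.

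For necessity, suppose $\Omega^1(\log S)$ is free with basis $\omega_1,\dots,\omega_n$. Writing $f\omega_k=\sum_j a_{kj}\,dx_j$ and expanding each logarithmic form $dx_j=\sum_k b_{kj}\omega_k$, the matrix identity $BA=fI$ gives $\det B\cdot\det A=f^n$; by the claim $\det A=f^{n-1}V$ with $V\in\OO$, so $V\det B=f$, whence $V\mid f$ and $V$ is squarefree. It remains to see that no irreducible factor $f_k$ of $f$ divides $V$. Localizing at the height-one prime $\mathfrak p_k$ of the component $(f_k=0)$, where $S$ is smooth, $\Omega^1(\log S)_{\mathfrak p_k}$ has the standard basis whose wedge is a unit times $\frac{1}{f_k}\,dx_1\wedge\cdots\wedge dx_n$; since $\{\omega_i\}$ is also a basis there, the change of basis lies in $\mathrm{GL}_n(\OO_{\mathfrak p_k})$ and comparing wedges shows $V$ is a unit in $\OO_{\mathfrak p_k}$, i.e. $f_k\nmid V$. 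Being squarefree, dividing $f$, and divisible by none of its factors, $V$ is a unit, so $\omega_1\wedge\cdots\wedge\omega_n=\frac{V}{f}\,dx_1\wedge\cdots\wedge dx_n$ has the required form. The main obstacle is the divisibility claim of the second paragraph: the bookkeeping elsewhere is routine, but passing from the smooth-point computation to a genuine divisibility in $\OO$ is exactly where the reducedness of $f$ and the codimension $\geq 2$ of its singular locus are indispensable.
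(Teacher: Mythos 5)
Your proposal is correct, but there is no in-paper argument to measure it against: the paper states Proposition \ref{modulolibre} as a quoted theorem of K.~Saito (the text reads ``K.~Saito proves in \cite{Saito80} the following result'') and supplies no proof of it; the only proof given in that section is for the preceding proposition on logarithmic forms. What you have written is in essence a self-contained reconstruction of Saito's original argument. Your central ``claim'' --- that a wedge of $n$ forms in $\Omega^1(\log S)$ lies in $\frac{1}{f}\Omega^n$, proved by the coefficient computation at smooth points of $S$ (where $f$ may be taken as a coordinate, so $n-1$ columns of the coefficient matrix are divisible by $f$) followed by extension of $\det(a_{kj})/f^{n-1}$ across the codimension $\geq 2$ singular locus, which exists precisely because $f$ is reduced --- is exactly Saito's key lemma, and both the Cramer's-rule deduction of sufficiency and the identity $\det B\cdot\det A=f^n$ driving necessity follow his scheme. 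So the proof is right, and it is the natural one; it just cannot be said to agree or disagree with ``the paper's proof,'' since the paper delegates this statement to the literature.

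Two spots deserve tightening. In the necessity direction, the phrase ``localizing at the height-one prime $\mathfrak{p}_k$ \dots where $S$ is smooth'' conflates a geometric picture with an algebraic operation; what the argument actually needs is (i) that localization commutes with the formation of $\Omega^1(\log S)$, which holds because $\Omega^1(\log S)$ is the kernel of the $\OO$-linear map $\frac{1}{f}\Omega^1\to\bigl(\frac{1}{f}\Omega^2\bigr)/\Omega^2$, $\omega\mapsto [df\wedge\omega]$, and localization is exact, and (ii) that some $\partial f_k/\partial x_j$ is a unit in $\OO_{(f_k)}$ (true since $f_k$ is irreducible, hence reduced), so that, after renumbering, $\frac{df_k}{f_k},dx_2,\ldots,dx_n$ generate the localized module and have wedge equal to a unit times $\frac{1}{f_k}dx_1\wedge\cdots\wedge dx_n$. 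With (i) and (ii), your change-of-basis comparison does yield $f_k\nmid V$, but as written these two facts are asserted rather than checked. Similarly, in the proof of the claim you should fix a representative neighborhood on which $f$ remains reduced and on which the identities expressing that $f\omega_k$ and $df\wedge\omega_k$ are holomorphic hold at every point, so that the smooth-point computation and the second Riemann extension theorem genuinely apply; both are routine verifications, not gaps in the idea.
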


When this condition is fulfilled, and $\omega_1,\omega_2, \ldots , \omega_n$ are found, it is easy to represent the elements of $\Omega^1 (\log S)$.

\begin{ejemplo}
Let $f=y^2+x^n$, $\omega_1=\frac{df}{f}$, $\omega_2 =\frac{1}{f} (2x dy-nydx)$, $\omega_1\wedge \omega_2=\frac{2n}{f} dx\wedge dy$. So, $\{ \omega_1,\omega_2\}$ is a basis of the free $\OO$-module $\Omega^1 (\log S)$.
\end{ejemplo}

In this paper, we are interested in finding explicitly generators of a foliation having particular separatrices, and satisfying certain additional conditions that we will precise. The preceding results are not of immediate application, because, for instance, they don't take into account the integrability condition if $n\geq 3$; so, we will need to use different techniques. We are interested in the problem of classifying analytically germs of holomorphic foliations of cuspidal quasi-homogeneous type in dimension three, and the foliations considered in the first part of this paper are a $n+1$-dimensional generalization of these. We will recall first some of the main notions and known results in dimensions two and three.

Let $\FF_1$, $\FF_2$ be two germs of holomorphic foliations in
$(\CC^n,\bo )$, represented by integrable 1-forms $\om_1$, $\om_2$
respectively. We will say that $\FF_1$, $\FF_2$ are analytically
equivalent if there exist a biholomorphism $\Phi:
(\CC^n,\bo )\rightarrow (\CC^n,\bo )$ such that  $\Phi^{\ast} \om_1\wedge\om_2=0$, or equivalently, if there exist a unit $U$ such that $\Phi^{\ast} \om_1 =
U\cdot \om_2$. If we only impose $\Phi$ to
be formal (i.e., $\Phi:\CC [[\bx ]]^n \rightarrow \CC[[\bx ]]^n$ invertible), we will say that $\FF_1$, $\FF_2$ are formally
equivalent.

A germ of two-dimensional holomorphic foliation $\FF$
has a simple singularity at the origin if it can be defined by a
1-form $\om = a(x,y)dx+b(x,y)dy$, such that the linear part of
$b(x,y)\frac{\partial}{\partial x} -
a(x,y)\frac{\partial}{\partial y}$ (the dual vector field) has
two eigenvalues $\lambda_1$, $\lambda_2$, with $\lambda_2\neq
0$, $\lambda_1/\lambda_2\notin \QQ_{<0}$. If $\lambda_1/\lambda_2
\notin \QQ_{\leq 0}\cup \NN \cup \frac{1}{\NN}$, the foliation is
formally linearizable, i.e., formally equivalent to $\lambda_2y
dx-\lambda_1 xdy$. Moreover, if $\lambda_1/\lambda_2\notin
\RR_{\leq 0} \cup \NN \cup \frac{1}{\NN}$, it is analytically
linearizable.

The analytic classification in the cases $\lambda_1/\lambda_2 \in
\QQ_{\leq 0}$ has been studied by J.~Martinet and J.-P.~Ramis
\cite{MR1,MR2}. When $\lambda_1/\lambda_2=-q/p\in \QQ_{<0}$,
$p\wedge q=1$, the foliation is formally equivalent to $qx(1+\mu\cdot
(x^py^q)^s) dy +py(1+(\mu+1)\cdot (x^py^q)^s)dx$, and analytically
equivalent to $pydx +qx (1+ a(x,y))dy$. If $\lambda_1=0$, it is
formally equivalent to $x^{p+1} dy -y (1+\lambda x^p) dx$, and
analytically equivalent to $x^{p+1}dy -A(x,y)dx$, with
$A(0,y)=y$. Let us observe that, in this last case, the formal expression $x^{p+1} dy -y (1+\lambda x^p) dx$ is unique, so $(p,\lambda )$ is a complete system of formal invariants. We will speak for formal normal form due to this uniqueness. The expression we have written in the analytic setting is not unique, so the word ``normal'' is not appropriate here: we will speak instead of \emph{prenormal forms}.

Consider now a nilpotent foliation, i.e., defined by a 1-form
such that the dual vector field has a nilpotent, non zero, linear
part, i.e., $\om= ydy + \cdots $. \mbox{F.~Takens}
\cite{Takens} shows that such a foliation is formally equivalent
to $\om_{n,p}=d(y^2+x^n)+x^p U(x)dy$, for some integers $n\geq
3$, $p\geq 2$, and $U(x)\in \CC[[x]]$, $U(0)\neq 0$. These integers are not uniquely determined: nevertheless the conditions $2p>n$, $2p=n$, $2p<n$ are preserved under conjugation. If $2p>n$, the foliation has
an invariant curve analytically equivalent to the cusp $y^2+x^n=0$
(for this reason these foliations are usually called
\textit{cuspidal}). In the study of
these foliations, it is interesting to write down explicitly the
foliation having $y^2+x^n=0$ as a separatrix. Following Proposition \ref{modulolibre}, such a foliation can be defined by a form $a(x,y) d(y^2+x^n)+b(x,y) (nydx-2xdy)$. The condition about the order at the origin implies that $a(x,y)$ is a unit, so $\omega = d(y^2+x^n)+A(x,
y) (nydx-2xdy )$ defines such a foliation, as is stated in \cite{Cerveau-Moussu}. This form is very useful in order to study the
analytic classification, via the projective holonomy, of cuspidal
foliations (see
\cite{Cerveau-Moussu,Loray-Meziani,Meziani,BMS,Strozyna,Meziani-Sad}).

Let us consider now the three-dimensional case. Reduction of singularities is shown in \cite{Cano-Cerveau,Cano}. This means that a birational map $\pi: \tilde{M}\rightarrow (\CC^3,\bo )$ can be constructed, composition of permissible blow-ups, such that the foliation $\pi^\ast \FF$ has only a kind of singularities called simple (see \cite{Cano-Cerveau,Cano} for precise definitions and descriptions of simple singularities in dimension three). These blow-ups construct a divisor in $\tilde{M}$, called the \emph{exceptional divisor}, with a finite number of irreducible components. For one of this simple singularities, call it $P$, the dimensional type is defined as
$$
\codim_{\CC} \{ \XX (P);\ \XX \text{ is a vector field in } (\CC^3,\bo ) \text{ tangent to }\FF \} .
$$
Equivalently, it is the minimum number of coordinates needed to describe a 1-form that defines the foliation. Singularities of dimensional type equal to two are cylinders over two-dimensional singularities, and their study is reduced to the planar case. Singularities of dimensional type three have formal normal forms as follows:

\be
\item $\om= xyz \left( \lambda_1\frac{dx}{x}+\lambda_2
\frac{dy}{y}+ \lambda_3 \frac{dz}{z}\right)$, in the linearizable
case (the quotients between the $\lambda_i$ do not belong to
$\QQ_{<0}$).
\item $\om = xyz(x^py^qz^r)^s \left[ \alpha \frac{dx}{x} + \beta
\frac{dy}{y}+ \left( \lambda +\frac{1}{(x^py^qz^r)^s}\right)
\cdot \left( p\frac{dx}{x}+ q\frac{dy}{y}+ r\frac{dz}{z}\right)
\right]$, $p$, $q$, $z\in \NN$, $s\in \NN^{\ast}$, $\alpha$,
$\beta \in \CC$ (resonant case).
\ee

The study of the analytic classification of simple singularities of  foliations  is done in \cite{Cerveau-LN} and \cite{Cerveau-Mozo}.

Consider non-simple singularities. We are interested in a convenient generalization  of cuspidal foliations, namely, foliations having $z^2+\varphi (x,y)=0$ as its separatrix. These surfaces are a particular case of the sometimes called \emph{Zariski surfaces}, of equations $z^k+\varphi (x,y) =0$, which, from different points of view, have been largely studied by several authors (see, for instance, \cite{Pichon} or \cite{MN}). The analytic classification in the \textit{quasi-ordinary} case, i.e., when $\varphi (x,y)
=x^py^q$ (the discriminant of the projection over the plane $(x,y)$ has normal crossings), is studied in detail in \cite{FM}, under some additional assumptions. In that paper,  a precise expression of foliations having $z^2+x^py^q=0$ as a separatrix is needed (Proposition 1 from \cite{FM}), and it is computed directly.

In this paper, we will describe a pre-normal form for singularities of codimension one holomorphic foliations in higher dimension, of cuspidal type. By \emph{cuspidal} we will understand non-dicritical, generalized hypersurfaces, having $z^2+\varphi
(\bx)=0$ as a separatrix. Before stating the main result, we will recall briefly some definitions, again in dimension three, and we will generalize these notions to higher dimensions.

A three-dimensional germ of holomorphic foliation is called non-dicritical if in its reduction of singularities, all components of the exceptional divisor that appear are generically leaves of the foliation. It is called \emph{generalized surface} if it is non-dicritical and moreover, no saddle-nodes appear after their reduction. Equivalently, if every non-degenerate plane section is a generalized curve in the sense of \cite{CLNS}. In \cite{Cano-Cerveau} it is shown that every non-dicritical three-dimensional singularity of holomorphic codimension one foliation admits a separatrix (in fact, a finite set of them). Generalized surfaces are studied in \cite{FM2}. We recall here the properties of generalized surfaces we will need throughout the paper. For, call $f$
a reduced equation of its set of separatrices. Then:
\be
\item The reduction of the singularities of $\FF$ agrees with the strong, embedded resolution of the singularities of $f$.
\item Write $\om= \om_\nu + \om_{\nu +1} + \cdots$, where the coefficients of $\om_k$ are homogeneous polynomials of degree $k$, and $\om_\nu\not\equiv 0$. The number $\nu$ is the \emph{order }   of $\om$ at the origin, and it is denoted by $\nu_0(\om )$.
 Then $\nu_0 (\om)=\nu_0 (df)$.
\ee

This notion can be extended to higher dimensions. As no reduction of singularities is known in the general case when $n\geq 3$, the definition is done by transversal sections.

\begin{defin}
A germ of foliation in $(\CC^n,\bo )$ is a \emph{generalized hypersurface} if every non-degenerate transversal plane section is a non-dicritical generalized curve.

Equivalently, by recurrence on $n$, if every non-degenerate transversal hyperplane section is a generalized hypersurface in $(\CC^{n-1},\bo )$.
\end{defin}

Recall, from \cite{Mattei-Moussu} and \cite{FM2} that if $i: (\CC^2,\bo )\rightarrow (\CC^n,\bo )$ is a transversal plane section, the following conditions holds:
\be
\item ${\rm Sing} (i^\ast \om )=i^{-1} ({\rm Sing} (\om ))=\{ \bo\}$.
\item $\nu_0 (\FF )= \nu_0 (i^\ast \FF )$.
\ee

In particular, this means that if $f=0$ is an equation of the set of separatrices of $\FF$, then $\nu_0 (df )= \nu_0 (\om )$. In fact, results from \cite{Cano-Cerveau,Cano-Mattei} show that every separatrix of $i^\ast \FF$ extends to a separatrix of $\FF
$, and moreover we have that $\nu_0 (\om)=\nu_0 (i^\ast \FF ) =\nu_0 (d(f\circ i))=\nu_0 (df)$.

As for generalized surfaces in the three-dimensional case, the following result can be proved for generalized hypersurfaces.
\begin{teorema}
The reduction of singularities of a generalized hypersurface is the same as the reduction of its set of separatrices.
\end{teorema}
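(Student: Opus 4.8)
The plan is to argue by induction on the dimension $n$, reducing the $n$-dimensional assertion to the lower-dimensional ones through non-degenerate transversal sections. The base case $n=2$ is the theorem of Camacho--Lins Neto--Sad on generalized curves \cite{CLNS}, and the case $n=3$ is the first property of generalized surfaces recalled above from \cite{FM2}; both say that the reduction of singularities of the foliation coincides with the strong embedded resolution of its set of separatrices, orders being preserved at every infinitely near point.

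For the inductive step, let $\FF$ be a generalized hypersurface in $(\CC^n,\bo)$ with reduced separatrix equation $f=0$, and let $\pi\colon \tilde M\to(\CC^n,\bo)$ be the strong embedded resolution of the hypersurface $f=0$, which exists by Hironaka's theorem and is a finite composition of blow-ups along smooth centers contained in the successive strict transforms of $\Sing(f)$. I would prove that this same morphism $\pi$ reduces $\FF$ to simple singularities, that every irreducible component of the exceptional divisor is invariant (so $\FF$ stays non-dicritical) and that no saddle-node appears; this exhibits the strong embedded resolution of $f=0$ as a reduction of singularities of $\FF$, which is exactly the asserted coincidence.

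The core of the argument is to certify the nature of $\pi^\ast\FF$ along the exceptional divisor $D=\pi^{-1}(\bo)$ by transversal sections. Choose a generic non-degenerate transversal hyperplane $H\ni\bo$. By definition $i^\ast\FF=\FF|_H$ is a generalized hypersurface in $(\CC^{n-1},\bo)$, and its set of separatrices is exactly $f|_H=0$: every separatrix of $i^\ast\FF$ extends to a separatrix of $\FF$ and conversely, by the cited results of \cite{Cano-Cerveau,Cano-Mattei}, and one has $\nu_0(\om)=\nu_0(i^\ast\FF)=\nu_0\bigl(d(f\circ i)\bigr)=\nu_0(df)$ throughout. The induction hypothesis then applies to $\FF|_H$, so the embedded resolution of $f|_H=0$ reduces $\FF|_H$. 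Using that being a simple singularity, the dimensional type, the invariance of a divisor component and the absence of saddle-nodes are all cylindrical notions, detectable on a generic transversal plane of complementary dimension through each point, I would stratify $D$ together with the strict transform of $f$ by the dimensional type and check simplicity stratum by stratum; a dicritical component or a saddle-node would already be visible in a generic transversal section, contradicting that $\FF|_H$ is a non-dicritical generalized hypersurface.

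The main obstacle is the equisingularity input needed to transport this information: one must know that, for generic $H$, the restriction of $\pi$ to the strict transform $\tilde H$ of $H$ is precisely a strong embedded resolution of $f|_H=0$, with the compatibility $(\pi^\ast\FF)|_{\tilde H}=\pi_H^\ast(\FF|_H)$. This is where Zariski's theory of equisingularity of hypersurfaces and the behaviour of the resolution under generic hyperplane sections enter, and where the choice of $H$ must be made transversal simultaneously to all blow-up centers and to all strata of the exceptional configuration, so that no singular behaviour of $\pi^\ast\FF$ escapes the chosen sections. The repeated use of the order equalities $\nu_0(\om)=\nu_0(df)$ at every infinitely near point, guaranteed by the generalized-hypersurface hypothesis, is precisely what keeps the multiplicities under control and rules out the emergence of saddle-nodes along the way.
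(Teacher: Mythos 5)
Your overall strategy---follow the embedded resolution of $f=0$ and certify that it also reduces $\FF$---is the right one, but the mechanism you propose for the certification, namely strict transforms $\tilde H$ of generic hyperplanes through the origin, cannot reach precisely the points where the statement is hardest. After the last blow-up the total transform of $f$ is a normal crossings divisor, and the delicate points are the zero-dimensional strata, where $n$ smooth components (exceptional components and strict transforms of branches of $f$) meet. Your own genericity requirement---$H$ transversal to all centers and to all strata of the exceptional configuration---forces $\tilde H$ to \emph{miss} every zero-dimensional stratum: transversality of a hypersurface to an isolated point-stratum means empty intersection (and already for $n\geq 3$, a dimension count shows that the hyperplanes whose strict transform hits a prescribed corner point form a positive-codimension subfamily). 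So no section in your scheme ever passes through the corners, and the simplicity of $\pi^\ast\FF$ there is never checked; yet these are exactly the points where saddle-nodes or non-simple pre-simple points could hide. The paper's proof is built around a key Lemma aimed at exactly those points: a generalized hypersurface with exactly $n$ smooth transversal separatrices through a singularity $P$ is simple. Its proof uses a transversal section \emph{at $P$ upstairs} (not a section inherited from $(\CC^{n},\bo)$): the order equality $\nu_0(i^\ast\om)=n-1=\nu_0(\om)$ forces some coefficient $a_i(\bo)\neq 0$ when one writes $\om=x_1\cdots x_n\sum_{i=1}^n a_i(\bx)\,dx_i/x_i$, giving pre-simplicity, and a pre-simple generalized hypersurface at a corner is simple.

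There is a second gap, which you flagged yourself as the ``main obstacle'': the compatibility claim that $\pi$ restricted to $\tilde H$ is a strong embedded resolution of $f|_H=0$, together with $(\pi^\ast\FF)|_{\tilde H}=\pi_H^\ast(\FF|_H)$. This is not delivered by Zariski's equisingularity theory; the commutation of embedded resolution with generic hyperplane sections is a well-known difficulty and not a citable input, so your induction rests on an unproved (and in general problematic) statement. The paper avoids it entirely: transversal sections are used only \emph{locally}, at points of the transformed foliation, to transport the two facts that survive blow-up---invariance of the divisor (non-dicriticalness) and the order equality $\nu_0(\om)=\nu_0(df)$---and the coincidence of the two resolutions then follows from the Lemma above. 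As structured, the invisibility of the corner points to your sections is fatal to the induction, independently of whether the equisingularity compatibility could be repaired.
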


As in smaller dimensions, the key point is the following Lemma:
\begin{lema}
If a generalized hypersurface has exactly $n$ smooth and transversal separatrices through a singularity $P$, then it is simple.
\end{lema}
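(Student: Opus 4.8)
The plan is to normalise the configuration and then read off the normal form from the logarithmic structure forced by the separatrix. First I would choose coordinates so that the $n$ smooth, pairwise transversal separatrices through $P$ become the coordinate hyperplanes $\{x_i=0\}$; their union is then the normal crossings divisor $f=x_1\cdots x_n=0$, which is a reduced equation of the set of separatrices of $\FF$. Since $f=0$ is a separatrix, $\omega$ is logarithmic along it, and because $f$ is normal crossings the module $\Omega^1(\log(f=0))$ is free with basis $\{dx_i/x_i\}_{i=1}^n$: indeed $\frac{dx_1}{x_1}\wedge\cdots\wedge\frac{dx_n}{x_n}=\frac{dx_1\wedge\cdots\wedge dx_n}{f}$, so Proposition~\ref{modulolibre} applies. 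Hence there are unique $a_i\in\OO$ with $\omega=x_1\cdots x_n\sum_{i=1}^n a_i\,\frac{dx_i}{x_i}=\sum_{i=1}^n a_i\,\hat{x}_i\,dx_i$, where $\hat{x}_i=\prod_{j\neq i}x_j$.

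Next I would compute orders. Since $\nu_0(df)=n-1$ and $\FF$ is a generalized hypersurface, the identity $\nu_0(\omega)=\nu_0(df)$ gives $\nu_0(\omega)=n-1$. As each $\hat{x}_i$ has order exactly $n-1$, the lowest order part of $\omega$ is $\omega_{n-1}=\sum_i\lambda_i\,\hat{x}_i\,dx_i$ with $\lambda_i=a_i(\bo)$, and $\omega_{n-1}\not\equiv 0$ forces at least one $\lambda_i\neq 0$. The core of the argument is to upgrade this to: every $\lambda_i\neq 0$, and the family $(\lambda_i)$ satisfies the (non-)resonance conditions characterising a simple singularity (in the diagonal case $\lambda_i/\lambda_j\notin\QQ_{<0}$ for all $i\neq j$, together with the corresponding resonant normal form when a positive integral relation among the $\lambda_i$ occurs).

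To establish these eigenvalue conditions I would pass to the two–dimensional sections guaranteed by the definition of a generalized hypersurface. For a fixed pair $i\neq j$, I would produce an admissible transversal plane section whose restricted foliation is a generalized curve whose linear part has eigenvalue ratio governed by $\lambda_i/\lambda_j$; the absence of saddle–nodes (built into ``generalized curve'') then excludes $\lambda_i=0$, and non-dicriticalness together with the classical two–dimensional dictionary excludes $\lambda_i/\lambda_j\in\QQ_{<0}$, leaving exactly the resonance patterns allowed in a simple singularity. Equivalently, one may blow up the origin: non-dicriticalness makes the exceptional divisor invariant, and a vanishing $\lambda_i$, or a forbidden ratio, would produce either a dicritical component or a saddle–node in the blown-up foliation, contradicting that $\FF$ is a generalized hypersurface. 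With all $\lambda_i\neq 0$ and the ratio/resonance conditions in hand, the logarithmic expression $\omega=x_1\cdots x_n\sum_i a_i\,\frac{dx_i}{x_i}$ together with the integrability $\omega\wedge d\omega=0$ puts $\FF$ into one of the simple normal forms, so $P$ is simple.

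I expect the main obstacle to be precisely this passage between the $n$-dimensional germ and its two-dimensional sections while keeping track of the eigenvalues $\lambda_i$. The difficulty is that the coordinate $2$-planes that would most directly display a given pair $(\lambda_i,\lambda_j)$ lie inside the separatrix set and are therefore degenerate (non-admissible) sections; one must instead work with a carefully chosen admissible transversal, or with an explicit blow-up chart adapted to $x_i$, and verify both that the eigenvalue data survives the restriction and that a saddle-node or dicritical component genuinely appears whenever some $\lambda_i$ vanishes or a ratio is negative rational.
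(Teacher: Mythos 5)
Your first half coincides with the paper's own proof: after putting the $n$ separatrices in the coordinate hyperplanes and writing $\om = x_1x_2\cdots x_n\sum_{i=1}^n a_i(\bx)\,\frac{dx_i}{x_i}$ (your appeal to Proposition~\ref{modulolibre} is a legitimate, if slightly heavier, way to get this), the order identity $\nu_0(\om)=\nu_0(df)=n-1$, obtained through a transversal plane section, forces some $a_i(\bo)\neq 0$, i.e.\ the singularity is \emph{pre-simple}. At that point the paper finishes in one sentence by invoking a known fact from the theory of generalized surfaces/hypersurfaces (cf.\ \cite{FM2}, \cite{Cano-Cerveau}): a pre-simple generalized hypersurface at a corner is simple. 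You instead set out to re-derive this fact, by showing that \emph{all} the residues $\lambda_i=a_i(\bo)$ are nonzero and satisfy the non-resonance conditions. That part of your text is a plan, not a proof: no section is actually produced, no blow-up computation is carried out, and you yourself end by listing what ``one must verify.'' This is the genuine gap — the crux of the lemma is exactly the step you leave open.

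Moreover, the plan as stated meets a real obstruction, and it is worse than the one you flag. For $n\geq 3$ the order of $\om$ is $n-1\geq 2$, so \emph{every} transversal plane section also has order $n-1$ and its linear part vanishes identically: there is no ``eigenvalue ratio of the linear part governed by $\lambda_i/\lambda_j$'' to read off from a section. The $\lambda_i$ are residues of the logarithmic form $\om/f$, not eigenvalues, and to see eigenvalues one must first blow up; but after blowing up the origin the residue along the exceptional divisor is $\sum_k \lambda_k$, so the ratios appearing at the new corner points are of the form $\lambda_i\big/\sum_k\lambda_k$, not the pairs $\lambda_i/\lambda_j$ your argument needs. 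Making rigorous the claim that ``a vanishing $\lambda_i$ or a forbidden ratio produces a saddle-node or a dicritical component,'' uniformly in $n$ and through the whole resolution process, is precisely the content of the result the paper quotes; as written, your proposal assumes a version of what it must prove. To repair it, either cite that result (as the paper does), or carry out the blow-up-and-induction argument in full, tracking residues along all components of the exceptional divisor rather than only pairwise ratios at the original singular point.
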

\begin{proof}
In fact, take a transversal section $i$ through $P$, such that $i^\ast \FF$ has $n$ smooth transversal curves as separatrices, so $\nu_0 (i^\ast \om )= n-1= \nu_0 (\om )$. If the foliation is generated by a 1-form ${\displaystyle \om= x_1x_2\cdots x_n \sum_{i=1}^n a_i (\bx )\frac {dx_i}{x_i}}$, this implies that some $a_i (\bo )\neq 0$, so the singularity is pre-simple. A pre-simple generalized hypersurface, that is a corner, is simple.
\end{proof}

In this paper, we will consider germs of holomorphic foliations in $(\CC^{n+1}, \bo )$, where coordinates are denoted $(\bx, z)= (x_1,x_2,\ldots , x_n, z)$. A cuspidal hypersurface will be defined by an analytic equation $f=z^2+\varphi (\bx )=0$, where $\nu_0 (\varphi )\geq 2$. We will consider foliations $\FF$ having such $f$ as a separatrix, and of the generalized hypersurface type, as described above. Such a foliation will also be called cuspidal.

    The first result of the paper is:

 \begin{teorema} \label{separatriz}
Consider a cuspidal foliation $\FF$ on $(\CC^{n+1}, \bo )$, with separatrix defined by the equation $f=z^2+\varphi (\bx )=0$. Then, there exist
coordinates such that  a generator of $\FF$ is
$$
\om = d(z^2+\varphi') + G(\Psi, z)\cdot (z\cdot \Psi) \left(
2\frac{dz}{z}-\frac{d\varphi'}{\varphi'}\right) ,
$$
where $\varphi' = \varphi\cdot u=\Psi^r$, $u$ a unit, $\Psi$ is not a power, and $G$ is a
germ of holomorphic function in two variables.
\end{teorema}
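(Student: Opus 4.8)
The plan is to show that, in suitable coordinates, $\FF$ is the pullback of a two-dimensional cuspidal foliation under the map $\pi=(\Psi,z)\colon(\CC^{n+1},\bo)\to(\CC^2,\bo)$, and then to invoke the planar description of cuspidal foliations recalled above.

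First I would normalize the separatrix. Since $\OO_n=\CC\{\bx\}$ is a UFD, write $\varphi=u^{-1}\Psi^r$ with $u$ a unit depending only on $\bx$, $r\geq 1$ maximal, and $\Psi$ not a power. Then $z^2+\varphi=u^{-1}(uz^2+\Psi^r)$, and as $u=u(\bx)$ admits a holomorphic square root, the change $z\mapsto\sqrt{u(\bx)}\,z$ is a biholomorphism carrying the separatrix to $z^2+\varphi'=0$ with $\varphi'=\Psi^r$. From now on $f=z^2+\Psi^r$, and I set $\theta:=z\Psi(2\,dz/z-d\varphi'/\varphi')=2\Psi\,dz-rz\,d\Psi$. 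A direct computation gives $df\wedge\theta=-2rf\,dz\wedge d\Psi$, so both $df$ and $\theta$ are tangent to the separatrix, and the foliation they span is exactly $\pi^{\ast}$ of the planar cusp $w^2+t^r=0$.

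The heart of the argument is to prove that a cuspidal generator $\om$ lies in this pullback, i.e.\ that $\om\wedge dz\wedge d\Psi=0$. By the logarithmic criterion (the first Proposition above), $\om\wedge df=f\eta$ for a holomorphic $2$-form $\eta$. For any germ of vector field $Y$ tangent to the fibres of $\pi$, that is with $\iota_Y dz=\iota_Y d\Psi=0$, one has $\iota_Y df=0$; contracting the identity with $Y$ yields $(\iota_Y\om)\,df=f\,\iota_Y\eta$. Since $2z=\partial f/\partial z$ is a coefficient of $df$ and $\gcd(2z,f)=1$, it follows that $f$ divides $\iota_Y\om$, so the transversal coefficients of $\om$ are divisible by $f$. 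The main obstacle is to upgrade this divisibility to $\iota_Y\om=0$: this does \emph{not} follow from integrability and the order condition alone, as the integrable form $df+f\,dy$, which still satisfies $\nu_0(\om)=\nu_0(df)$, shows. Here the full strength of the generalized hypersurface hypothesis must enter: a nonzero $f$-multiple along a fibre direction produces, exactly as in the model $df+f\,dy$, a saddle-node (or a dicritical component) after reduction, contradicting the coincidence between the reduction of $\FF$ and that of its separatrix (the Lemma and Theorem recalled above). I would carry this out by induction on $n$, restricting to the generic transversal hyperplane sections furnished by the recursive definition of generalized hypersurface and applying the lower-dimensional case to annihilate the fibre directions contained in the section, while the remaining direction is controlled by letting the section vary in a pencil.

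Granting $\om\wedge dz\wedge d\Psi=0$, off the analytic set $\{d\Psi=0\}$ I may write $\om=P\,dz+Q\,d\Psi$; the integrability condition $\om\wedge d\om=0$ then reduces, in coordinates adapted to $\pi$, to $Q\,\partial_{y_j}P-P\,\partial_{y_j}Q=0$ for every fibre coordinate $y_j$, so $P/Q$ is a function of $(\Psi,z)$ alone and $\FF=\pi^{\ast}\mathcal{G}$ for a germ of planar foliation $\mathcal{G}$ with separatrix $w^2+t^r=0$ (these being holomorphic identities, they extend across $\{d\Psi=0\}$). By the definition of generalized hypersurface, a transversal plane section on which $\pi$ restricts to a biholomorphism realizes $\mathcal{G}$ as a non-dicritical generalized curve; the two-dimensional description recalled from \cite{Cerveau-Moussu} then provides a generator $d(w^2+t^r)+A(t,w)(rw\,dt-2t\,dw)$ of $\mathcal{G}$. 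Pulling back by $\pi$ and setting $G=-A(\Psi,z)$ yields the holomorphic generator $\om=d(z^2+\varphi')+G(\Psi,z)\,\theta$ of $\FF$, which is precisely the asserted form.
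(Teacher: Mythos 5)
Your plan hinges on the claim that, once the separatrix is normalized to $z^2+\Psi^r=0$, the generalized hypersurface hypothesis forces $\om\wedge dz\wedge d\Psi=0$, via the dichotomy ``a nonzero $f$-multiple in a fibre direction produces a saddle-node or a dicritical component, as in the model $df+f\,dy$''. This claim is false, and your own test form refutes it. Take $n\geq 2$, $f=z^2+\Psi^r$ in normalized form, and a coordinate $y$ among the $x_i$ such that $dy$, $dz$, $d\Psi$ are generically independent; then $\om=df+f\,dy=e^{-y}\,d(e^{y}f)$ admits the nonconstant holomorphic first integral $e^{y}f$. Consequently it is non-dicritical (every component of the exceptional divisor is connected, hence contained in a level set of the pulled-back first integral, hence invariant), and no saddle-node can appear in its reduction: every holonomy map preserves a nonconstant first integral on a transversal, which in a suitable coordinate is $w\mapsto w^k$, so the holonomy is of finite order, whereas the strong holonomy of a saddle-node has infinite order. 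The same argument applies to every transversal plane section, so this $\om$ defines a cuspidal generalized hypersurface whose sole separatrix is $f=0$, already in your normal form. Yet $\om\wedge dz\wedge d\Psi=f\,dy\wedge dz\wedge d\Psi\neq 0$, and multiplying $\om$ by a unit cannot change this. So the step you yourself call the heart of the argument is not merely left as a sketch (the induction on sections and the ``pencil'' argument are never carried out); it is not true, and no elaboration of that sketch can establish it.

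The theorem survives this example because its conclusion allows a change of coordinates: setting $Z=e^{y/2}z$ and $\Psi'=e^{y/r}\Psi$ one has $e^{y}f=Z^2+(\Psi')^r$, so the foliation \emph{is} a pullback, but under a fibration different from the one you froze at the outset. This is exactly what your approach misses: normalizing the equation of the separatrix does not adapt the transverse structure of $\FF$ to the projection $(\Psi,z)$; a further, genuinely fibered change of coordinates depending on $\FF$ is needed, and producing it is the real content of the proof. The paper obtains it as follows: after the Saito-type decomposition $U\om=d(z^2+\varphi)+H\,(z\,d\varphi-2\varphi\,dz)+(z^2+\varphi)\om_3$ of Lemma \ref{cuentasiniciales}, the order condition $\nu_0(\om)=\nu_0(df)=1$ makes the linear part $zdz+\cdots$ non-radial, Loray's preparation theorem then gives fibered coordinates in which a generator is $z\,dz+df_0(\bx)+z\,df_1(\bx)$, and only at that point does integrability enter: $df_0\wedge df_1=0$ and Mattei--Moussu produce a single germ $f(\bx)$ with $f_0=h_0(f)$, $f_1=h_1(f)$, i.e.\ the pullback structure, after which the two-dimensional model of Cerveau--Moussu and a comparison of separatrices conclude. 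A further, independent defect of your last step: when $\nu_0(\Psi)\geq 2$ there is no plane section on which $(\Psi,z)$ restricts to a biholomorphism (any composition $\Psi\circ i$ has order at least $2$), so identifying the planar foliation $\mathcal{G}$ as a generalized curve, which you need in order to get a unit coefficient in front of $d(w^2+t^r)$, would also require a different argument, for instance via the order condition $\nu_0(\om)=\nu_0(df)$.
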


In the quasi-homogeneous case, $\varphi'$ can be replaced by $\varphi$ in the statement of the Theorem. This result can be seen as a variant of Saito's Theorem, where integrability condition is taken into account. In fact, it would be interesting to characterize all hypersurfaces $f=0$ such that the set of integrable 1-forms having $f$ as a separatrix is a $\OO$-free module.

The proof is based in direct computation, and integrability condition will be introduced geometrically via a Theorem of F. Loray \cite{Loray}, that essentially generalizes Weierstrass Preparation Theorem to the case of germs of holomorphic foliations. As
a final conclusion, we can see that the integrable 1-forms defining a germ of such a cuspidal foliations are pull-backs of two-dimensional forms, modulo a certain unit factor that we need to add to the expression. In the quasi-homogeneous case, this factor can even be avoided, thanks to a property of quasi-homogeneous germs taken from Cerveau and Mattei \cite{Cerveau-Mattei}.

Foliations generated by a 1-form of the type described above can even be dicritical, and have more separatrices. In the last section, we will find a sufficient condition, in dimension three, for one of these foliations, of the quasi-homogeneous cuspidal type, to be a generalized surface, with separatrix exactly given by the equation $z^2+ \prod_{i=1}^l (y^p-a_i x^q)^{d_i}$. This condition will be stated in Theorem \ref{superficie-generalizada}, and is inspired by the one given by Loray \cite{Loray99} in dimension two.

It is worth to mention here that in the list of \emph{Open questions and related problems} proposed by F. Cano and D. Cerveau in \cite{Cano-Cerveau}, one of them is stated as follows:
\begin{quote}
(5) Classify the non-dicritical singular foliations in $(\CC^3, \bo )$ generated by one 1-form with initial part of the type $xdx$.
\end{quote}

As few results about three (or higher)-dimensional holomorphic foliations exist
in literature, we consider that the results in this paper are interesting in order to follow this direction.  Also,  to develop the analytic classification of foliations of cuspidal type, via the projective holonomy, it is necessary to construct a Hopf fibration allowing to lift the conjugation between the holonomies to a conjugation between foliations, and the pre-normal form obtained here is useful for that aim. This will be the subject of a future work of the authors and of the PhD Thesis of the third author.

\section{Proof of Theorem \ref{separatriz}}
We will begin establishing the following Lemma, in a more general setting that the main result of the paper:
\begin{lema} \label{cuentasiniciales}
Let $\omega$ be a germ of 1-form in $(\CC^{n+1},\bo )$, logarithmic along $S$, hypersurface defined by $f= z^k+\varphi (\bx )$, with $\nu_0 (\varphi )\geq k$, and such that $\nu_0 (\omega )= \nu_0 (df)=k-1$. Then, there exist a unit $U$ such that $U\cdot
\omega=\om_1+H\om_2+(z^k+\varphi )\cdot \om_3$, where,
\begin{align*}
\om_1 & = d(z^k+\varphi ),\\
\om_2 & = zd\varphi -k\varphi dz,\\
\om_3 & = \sum_{i=2}^n g_i dx_i.
\end{align*}
\end{lema}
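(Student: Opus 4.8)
My plan is to extract the purely ``Saito-type'' content from the first Proposition and then match coefficients. Write $\omega = A\,dz + \sum_{i=1}^{n} B_i\,dx_i$ with $A,B_i\in\OO$. Being logarithmic along $S$ means $f$ is a separatrix, so condition (1) of the first Proposition holds; applying the explicit equivalence (1)$\Leftrightarrow$(3) in the $z$-direction---legitimate because $\partial f/\partial z = kz^{k-1}$ is coprime to $f=z^k+\varphi$ (indeed $z\nmid f$, as $\varphi\neq 0$ does not involve $z$)---yields the separatrix relations
\begin{align*}
\varphi_i A - k z^{k-1} B_i &= f\, g_i \qquad (1\le i\le n),\\
B_i\varphi_j - B_j\varphi_i &= f\, g_{ij},
\end{align*}
where $\varphi_i=\partial\varphi/\partial x_i$. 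I also record the identity $z\,\om_1-\om_2 = kf\,dz$, i.e. $\om_2 = z\,df - kf\,dz$, so that the target is equivalent to producing a unit $U$, a function $H$ and a $1$-form without $dx_1$-term with $U\omega = (1+Hz)\,df + f(\om_3 - kH\,dz)$. To single out $x_1$ I note that, since $\varphi\neq 0$, some $\varphi_i\neq 0$, and any $p\in\OO_n$ dividing $f=z^k+\varphi$ must be a unit (compare the $z^k$-coefficient); hence each nonzero $\varphi_i$ is automatically coprime to $f$, and after relabelling I may assume $\varphi_1\neq 0$ is coprime to $f$.

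Next I match the $dz$- and $dx_1$-coefficients of the target, which read $UA = k(z^{k-1}-H\varphi)$ and $UB_1=\varphi_1(1+Hz)$. Eliminating $H$ between them and substituting the first separatrix relation for $i=1$ collapses the pair to the single clean identity $U\,(kB_1+zg_1)=k\varphi_1$. This forces the choice
\[
U=\frac{k\varphi_1}{kB_1+zg_1},\qquad H=-\frac{g_1}{kB_1+zg_1},
\]
and a direct back-substitution (using the separatrix relation) shows these two equations are then genuinely compatible, so both coefficients match. Thus everything reduces to two divisibility facts: that $kB_1+zg_1$ and $\varphi_1$ are \emph{associates} (so that $U$ is a \emph{unit}), and that $kB_1+zg_1$ divides $g_1$ (so that $H\in\OO$).

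I expect these divisibilities to be the real obstacle. The order hypothesis $\nu_0(\omega)=\nu_0(df)=k-1$ only gives the \emph{inequalities} $\nu_0(kB_1+zg_1)\ge k-1$ and $\nu_0(f)=k$, which keep the scheme dimensionally consistent but do not by themselves yield the associate condition; that condition says the initial form $\om_{k-1}$ of $\omega$ is not ``weighted-radial'' in the $(z,x_1)$-variables, and this is precisely where the cuspidal, non-dicritical nature of $\FF$ must be invoked. The numerical order condition is genuinely insufficient on its own: the quasi-radial forms along $z^k+\varphi$ satisfy $\nu_0(\omega)=\nu_0(df)=k-1$ yet make $kB_1+zg_1$ fail to be associate to $\varphi_1$ (or make $\varphi_1\nmid g_1$), and for them no such decomposition exists. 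So the heart of the proof is the leading-jet/non-dicriticality analysis establishing the two divisibilities; I would organize it through the precise form of $\om_{k-1}$ versus $(df)_{k-1}$.

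Finally, with $U$ and $H$ fixed, the remaining coefficients give, for $j\ge 2$, the equations $UB_j-\varphi_j(1+Hz)=f\,g_j'$, and I simply \emph{define} $g_j'$ as the corresponding quotients. Their holomorphy (divisibility by $f$) follows from the second family of separatrix relations $B_i\varphi_j-B_j\varphi_i=f\,g_{ij}$ together with the expressions already obtained for $U$, $H$ and $B_1$; this last part is routine bookkeeping rather than a conceptual difficulty. Setting $\om_3=\sum_{j\ge 2}g_j'\,dx_j$ then yields $U\omega=\om_1+H\om_2+f\,\om_3$, as claimed.
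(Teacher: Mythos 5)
Your coefficient-matching argument is, up to bookkeeping, exactly the paper's own proof: the paper wedges $\om$ with $\om_1=df$ and with $\om_2=z\,d\varphi-k\varphi\,dz$, solves the resulting $2\times 2$ systems by the adjoint matrix, and lands on the same forced pair as you do. The dictionary is: the paper's $H_i$ equals your $-g_i$, its $G_i$ equals your $-(kB_i+zg_i)$, so its unit $\pm k\varphi_{x_1}/G_1$ and its coefficient $-H_1/G_1$ are precisely your $U=k\varphi_1/(kB_1+zg_1)$ and $H=-g_1/(kB_1+zg_1)$, and its final sum over $i\geq 2$ is your $g'_j$ step. Up to the point where you stop, the two arguments coincide.

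The divergence is at the two divisibility facts, and there your refusal to deduce them from the stated hypotheses is correct, whereas the paper's deduction is not. The paper disposes of them in one line each: ``as $\nu_0(\om)=k-1$, necessarily $G_1/\varphi_{x_1}$ must be a unit'', and the remaining coefficients are ``holomorphic by previous considerations''. This is a non sequitur: the order argument only works if one already knows the meromorphic coefficients $H_1/\varphi_{x_1}$, $G_1/\varphi_{x_1}$ are holomorphic, which is not given. Your quasi-radial objection is borne out by an explicit example: take $k=2$, $\varphi=x_1^3$, $f=z^2+x_1^3$, and $\om=2x_1\,dz-3z\,dx_1$ (pull-back of the radial form, with meromorphic first integral $z^2/x_1^3$; its leaves $z^2=cx_1^3$ all pass through the origin, so it is dicritical). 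Then $\om\wedge df=-6f\,dx_1\wedge dz$, so $\om$ is logarithmic along $S$, and $\nu_0(\om)=1=\nu_0(df)$; yet $kB_1+zg_1=-6z+6z=0$ (equivalently $G_1=0$), and no decomposition exists: since $\om_3$ carries no $dz$-term, comparing $dz$-coefficients in $U\om=\om_1+H\om_2+f\om_3$ gives $2Ux_1=2z-2Hx_1^3$, absurd along $x_1=0$. So the Lemma is \emph{false as stated}; the paper's proof has a genuine gap at exactly the step you isolated, and the missing hypothesis is the one you name---non-dicriticality with separatrix set exactly $S$, which is available where the Lemma is actually used, since Theorem \ref{separatriz} assumes $\FF$ is a cuspidal generalized hypersurface.

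That said, judged as a proof your text is also incomplete, by its own admission: the ``leading-jet/non-dicriticality analysis'' that would establish that $kB_1+zg_1$ is an associate of $\varphi_1$ (for $k=2$ this can indeed be extracted from the tangent cone of a generic plane section, where the generalized-curve property forces a nonzero $z\,dz$ term in the initial part) is announced but never carried out, and your last step is not mere bookkeeping either: from $B_1\varphi_j-B_j\varphi_1=fg_{1j}$ one gets $g'_j=-Ug_{1j}/\varphi_1$, so one must still prove $\varphi_1\mid g_{1j}$, or argue on polar sets---delicate when $\varphi$ is non-reduced and shares factors with $\varphi_1$, and glossed over in the paper by the phrase ``holomorphic by previous considerations''. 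So the accurate verdict is: your reduction matches the paper's, your diagnosis exposes a real error in the paper's own proof (the Lemma needs the stronger cuspidal hypothesis under which it is applied), but neither your proposal nor the paper contains a complete proof of the divisibility claims.
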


\begin{proof}
Denote $\om= \sum_{i=1}^n A_i dx_i + A dz$. As $\om$, $\om_1$, $\om_2$ are logarithmic along $S$, we have the following relations:
\begin{equation*}
\begin{split}
\om\wedge  \om_1 & = (z^k+\varphi )\cdot \left[ \sum_{i<j} H_{ij} \cdot dx_i\wedge dx_j + \sum_{i=1}^n H_i\cdot dx_i\wedge dz \right] \\
 & = \sum_{i<j} (A_i \varphi_{x_j} - A_j \varphi_{x_i} ) dx_i \wedge dx_j + \sum_{i=1}^n (kz^{k-1} A_i -A\varphi_{x_i}) dx_i\wedge dz. \\
 \om\wedge  \om_2 & = (z^k+\varphi )\cdot \left[ \sum_{i<j} G_{ij} \cdot dx_i\wedge dx_j + \sum_{i=1}^n G_i\cdot dx_i\wedge dz \right] \\
  & = \sum_{i<j} (A_i z\varphi_{x_j}- A_j z\varphi_{x_i}) dx_i\wedge dx_j - \sum_{i=1}^n (k A_i\varphi +zA\varphi_{x_i} ) dx_i \wedge dz.
\end{split}
\end{equation*}

Identifying coefficients, we have systems
$$
\begin{pmatrix}
kz^{k-1} & -\varphi_{x_i} \\
-k\varphi & -z\varphi_{x_i}
\end{pmatrix}
\begin{pmatrix}
A_i \\ A
\end{pmatrix} =
(z^k+\varphi) \begin{pmatrix} H_i \\ G_i \end{pmatrix}.
$$
Pre-multiplying by the adjoint matrix,
$$
-k\varphi_{x_i} (z^k+\varphi)
\begin{pmatrix} A_i \\ A \end{pmatrix} =
(z^k+\varphi)
\begin{pmatrix}
-z\varphi_{x_i} & \varphi_{x_i} \\
k\varphi & kz^{k-1}
\end{pmatrix}
\begin{pmatrix} H_i \\ G_i \end{pmatrix},
$$
so,
\begin{equation*}
\begin{split}
-kA_i & = -zH_i+G_i \\
-\varphi_{x_i} A &= \varphi H_i+z^{k-1}G_i.
\end{split}
\end{equation*}

Using the equality
$$
\varphi_{x_1} \cdot (\varphi H_i+z^{k-1} G_i ) =
\varphi_{x_i} \cdot (\varphi H_1+z^{k-1} G_1 ),
$$
we have that
\begin{equation*}
\begin{split}
\om  & =  \sum_{i=1}^n \frac{1}{k} \cdot (zH_i-G_i) dx_i -\frac{1}{\varphi_{x_1}} \cdot (\varphi H_1+z^{k-1} G_1) dz \\
   & = \sum_{i=1}^n \frac1k \cdot \left[ z\cdot \frac{ \varphi_{x_i} (\varphi H_1 +z^{k-1}G_1) - \varphi_{x_1} z^{k-1}G_i }{\varphi \cdot \varphi_{x_1}} -G_i\right] dx_i -\frac{1}{\varphi_{x_1}}\cdot (\varphi H_1 +z^{k-1}G_1) dz \\
    & = H_1\cdot\frac{1}{k\varphi_{x_1}} \cdot \om_2 -G_1 \cdot  \frac{1}{k\varphi_{x_1}}\cdot \om_1 \\
   & + (z^k+\varphi )\cdot \frac{1}{k\varphi} \cdot \left( \sum_{i=1}^n G_1 \frac{\varphi_{x_i}}{\varphi_{x_1}}dx_i - \sum_{i=1}^n G_i dx_i \right) \\
  & = H_1\cdot \frac{1}{k\varphi_{x_1}} \cdot \om_2 -G_1 \cdot \frac1k \cdot \frac{1}{\varphi_{x_1}} \cdot \om_1 + (z^k+\varphi ) \cdot \frac{1}{k\varphi} \sum_{i=2}^n \frac{G_1 \varphi_{x_i} -G_i\varphi_{x_1}}{\varphi_{x_1}}dx_i.
\end{split}
\end{equation*}

As $\nu_0 (\om )=k-1$, necessarily $\dfrac{G_1}{\varphi_{x_1}}$ must be a unit. So,
$$
k\frac{\varphi_{x_1}}{G_1} \om = \om_1 -\frac{H_1}{G_1} \cdot \om_2 -
(z^k+\varphi) \frac{1}{\varphi G_1} \sum_{i=2}^n \frac{G_1\varphi_{x_i}-G_i \varphi_{x_1}}{\varphi_{x_1}} dx_i,
$$
which is holomorphic by previous considerations. So, we obtain the stated result.
\end{proof}

Let us return to the case $k=2$. Consider a germ of holomorphic foliation in $(\CC^{n+1},\bo )$ generated by an integrable 1-form $\om= \om_1+H\cdot \om_2+(z^2+\varphi )\cdot \om_3$, as in Lemma \ref{cuentasiniciales}. The linear part of $\om$ is $2zdz+\sum_{i=1}^n \dfrac{\partial \varphi_2}{\partial x_i} dx_i$, where $\varphi_2$ is the homogeneous part of degree two of $\varphi$ (which is zero if the order of $\varphi$ is strictly greater than two),
that is not tangent to the radial vector field. Applying Theorem
1 and Corollary 3 of \cite{Loray}, there exists a fibered change
of variables $\Phi_1(\bx,z)= (x_1,x_2,\ldots , x_n,\varphi_1(\bx,z))$, such that
$\Phi_1^{\ast} \FF$ is generated by $\om' = zdz + df_0 (\bx)+z
df_1(\bx ) $, for certain germs of functions $f_0$, $f_1\in \CC \{ \bx\}$. A second change
of variables transforms $\om'$ in $\om''=d \left(
\dfrac{z^2}{2}+f_0(\bx )\right)- f_1(\bx )dz$. Start, now, up to a
constant, with a form $\om''= d(z^2+f_0(\bx ))-f_1(\bx ) dz$. The
integrability condition reads as $df_0\wedge df_1=0$. This means,
by \cite{Mattei-Moussu}, that there exist $f(\bx )$, $h_0(t)$,
$h_1(t)$, such that $f_0(\bx )= h_0(f(\bx ))$,
$f_1(\bx )=h_1(f(\bx ))$. So, if $\rho(\bx,z) =(f(\bx), z)$, then
$\om''=\rho^{\ast} \om_0$, with $\om_0:= d(z^2+h_0(t))-h_1(t)dz$.
Let $r:= \nu_0(h_0) $. The two-dimensional foliation defined by
$\om_0$ has $z^2+t^r+ h.o.t.$ as a separatrix, that is, up to a
unit, equal to $z^2+a(t)z+b(t)=0$. A change of variables
$z\mapsto z-\dfrac{a(t)}{2}$ transforms it in $z^2+c(t)=0$,
$r=\nu_0 (c(t))$. Writing $c(t)=t^r u_0(t)^2$, $u_0(0)\neq 0$,
the separatrix can be written as
$\left(\dfrac{z}{u_0(t)}\right)^2+t^r=0$: a new change of
variables $z\mapsto zu_0(t) $ allows to write this separatrix as
$z^2+t^r=0$. In this case, it is well known (see
\cite{Cerveau-Moussu}) that the foliation is generated by a
1-form $\om_0'=d(z^2+t^r)+ ztA(z,t)\cdot \left(
r\dfrac{dt}{t}-2\dfrac{dz}{z}\right) $.

Collecting all previous considerations, we see that there exists
a change of variables $\Phi_0 (t,z)= (t, zs_1(t)+s_0(t))$, with
$s_1(0)\neq 0$, such that  $\om_0 \wedge \Phi_0^{\ast} \om'_0=0$.
Consider the diagram
$$
\begin{diagram}
\node{\CC^{n+1} } \arrow{e,t}{\rho} \arrow{s,l}{F} \node{\CC^2} \arrow{s,r}{\Phi_0} \\
\node{\CC^{n+1}} \arrow{e,t}{\rho} \node{\CC^2}
\end{diagram}
$$
It is commutative choosing
$F(\bx,z)=(\bx ,zs_1(f(\bx ))+s_0(f(\bx )))$, which is a
diffeomorphism. The form $(F^{-1})^{\ast} \om''$ defines the same
foliation that
$$
\Omega =d(z^2+f(\bx)^r) + z f(\bx )A(z,f(\bx ))\cdot \left(
r\dfrac{df}{f}-2\dfrac{dz}{z}\right),
$$
analytically equivalent to $\FF$, and having $z^2+f(\bx )^r=0$ as
separatrix. Let us observe that the map that transforms $\om$ in
$\Omega$ is of the form $(\bx ,z)\mapsto (\bx,Z(\bx ,z))$, i.e., it
respects the projection over the first $n$ coordinates.
Write $Z(\bx ,z)=\sum_{k=0}^{\infty } Z_k(\bx)z^k$, with
$Z_1(\bo )\neq 0$. There is a unit $U(\bx ,z)$ such that
$Z(\bx ,z)^2+\varphi (\bx )= U(\bx ,z)(z^2+f(\bx )^r)$. If
$U(\bx ,z)=\sum_{k=0}^{\infty } U_k(\bx) z^k$, $U_0(\bo )\neq 0$,
the first coefficients in $z$ of the last equality give the
conditions
\begin{align*}
Z_0(\bx )^2+\varphi (\bx ) & = U_0 (\bx ) f(\bx )^r,\\
2Z_0(\bx ) Z_1(\bx ) & = U_1(\bx ) f(\bx )^r.
\end{align*}

So,
\begin{align*}
\varphi (\bx ) & = U_0(\bx )f(\bx )^r - Z_0(\bx )^2 \\
 & = U_0(\bx ) f(\bx )^r - \frac{U_1(\bx )^2 f(\bx )^{2r}}{4
Z_1(\bx )^2} \\
 & = f(\bx )^r \left[ U_0(\bx )- \frac{U_1(\bx )^2
f(\bx )^r}{Z_1(\bx )^2}\right].
\end{align*}

The expression between brackets turns out to be a unit, as required.

%

\begin{coro}
Every holomorphic codimension one foliation $\FF$ defined by a 1-form $\om$ with linear part equal to $zdz$ has a separatrix. Moreover, every leaf of $\FF$ admits a codimension one foliation with a holomorphic first integral.
\end{coro}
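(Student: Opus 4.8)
The plan is to observe that this Corollary is essentially a by-product of the reduction already carried out in the proof of Theorem \ref{separatriz}, which never used the existence of a separatrix as a hypothesis but only the shape of the linear part. First I would note that a $1$-form $\om$ with linear part $zdz$ is integrable (it defines $\FF$) and that this linear part is not tangent to the radial vector field, since $zdz$ pairs with it to give $z^2\not\equiv 0$. Hence Theorem 1 and Corollary 3 of \cite{Loray} apply exactly as before, yielding a fibered change of coordinates after which $\FF$ is generated by $\om' = zdz + df_0(\bx) + z\,df_1(\bx)$, and then by $\om'' = d(z^2 + f_0(\bx)) - f_1(\bx)\,dz$ up to a constant.

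The key step is then to bring in integrability. As in the proof of Theorem \ref{separatriz}, the condition $\om''\wedge d\om'' = 0$ reduces to $df_0\wedge df_1 = 0$, so by \cite{Mattei-Moussu} there are a germ $f(\bx)$ and one-variable germs $h_0,h_1$ with $f_0 = h_0\circ f$ and $f_1 = h_1\circ f$. Setting $\rho(\bx,z) = (f(\bx),z)$ and $\om_0 = d(z^2 + h_0(t)) - h_1(t)\,dz$, we obtain $\om'' = \rho^{\ast}\om_0$: up to analytic equivalence, $\FF$ is the pull-back by $\rho$ of the two-dimensional foliation $\FF_0$ defined by $\om_0$. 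From here both assertions follow from two-dimensional facts. For the separatrix, $\FF_0$ is a germ of plane foliation and hence admits a separatrix $\{g(t,z)=0\}$ by the Camacho--Sad separatrix theorem; as $\rho$ is dominant when $f$ is non-constant, $g(f(\bx),z)\not\equiv 0$, and $\rho^{-1}(\{g=0\}) = \{g(f(\bx),z)=0\}$ is invariant for $\om'' = \rho^{\ast}\om_0$. Undoing the changes of variables produces a separatrix of $\FF$.

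For the second assertion I would exploit the fibered structure of $\rho$. Any vector $v$ tangent to a fiber $\{f(\bx)=c_1,\ z=c_2\}$ satisfies $\rho_{\ast}v = 0$, hence $\om''(v) = \om_0(\rho_{\ast}v) = 0$; thus the fibers of $\rho$ are tangent to $\FF$ and form a foliation $\mathcal{G}$, with first integral the map $\rho$ itself, whose leaves are contained in the leaves of $\FF$. Each leaf $L$ of $\FF$ is $\rho^{-1}(L_0)$ for a leaf $L_0$ of $\FF_0$, which is a germ of curve; restricting $\mathcal{G}$ to $L$ gives a foliation of $L$ whose leaves are the $(n-1)$-dimensional fibers of $\rho$, that is, of codimension one inside the $n$-dimensional leaf $L$. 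Choosing a local holomorphic coordinate $w$ on the curve $L_0$, the composition $w\circ\rho|_L$ is a holomorphic first integral for this foliation, since its level sets are exactly the fibers of $\rho|_L$; equivalently one may take $f|_L$ or $z|_L$, whichever is non-constant on $L_0$.

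The main obstacle I anticipate is not conceptual but bookkeeping around the degeneracies of $\rho$: this map fails to be a submersion where $df=0$ (in particular at the origin when $\nu_0(f)\geq 2$), so the leaves $\rho^{-1}(L_0)$ and the fibers of $\rho$ are genuine smooth foliations only away from $\Sing(\FF)$, and one must argue on the regular part while checking that the pulled-back separatrix is reduced and passes through the origin. I would also treat separately the degenerate situations where $f$ (equivalently $h_0,h_1$) is constant, in which $\FF$ collapses to a much simpler foliation but the conclusions still hold, with $z$ playing the role of the first integral.
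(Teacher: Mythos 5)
Your proposal is correct and follows essentially the same route as the paper: both rest on the observation that the reduction in the proof of Theorem \ref{separatriz} (Loray's preparation theorem plus Mattei--Moussu applied to $df_0\wedge df_1=0$) never uses the separatrix hypothesis, and exhibits $\FF$, up to equivalence, as the pull-back $\rho^{\ast}\FF_0$ of a planar foliation under $\rho(\bx,z)=(f(\bx),z)$, after which both the separatrix and the codimension one foliations on the leaves (cut out by the fibers of $\rho$, with first integral induced by $\rho$) are read off from the two-dimensional picture. The only cosmetic difference is that you invoke Camacho--Sad for a separatrix of $\FF_0$, whereas the paper uses the prenormal form $d(z^2+t^r)+ztA(z,t)\left(r\frac{dt}{t}-2\frac{dz}{z}\right)$ already derived in the theorem's proof, which exhibits $z^2+t^r=0$ (hence $z^2+f(\bx)^r=0$ upstairs) as an explicit separatrix.
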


\begin{proof}
From the proof of the Theorem, every codimension one foliation $\FF$ in the conditions of the statement, is the pull-back of a two-dimensional foliation $d(z^2+t^r)+zt A(z,t) \left( r\frac{dt}{t}-2\frac{dz}{z}\right)$, by a holomorphic map $\rho (\bx ,z)=
 (f(\bx) , z)$. This map induces the mentioned foliation in the leaves: the leaves of these foliations are the inverse image of points.
\end{proof}

\begin{nota}

Let us remark that, in previous Corollary, the dicritical case is included (if not, the result would follow from more general results from Cano-Cerveau \cite{Cano-Cerveau}, and Cano-Mattei \cite{Cano-Mattei}). For instance, the foliation defined by
$$
\omega=d(z^2+ \Psi(x,y)^5 )+\Psi (x,y)^2 z \left(5\frac{d\Psi}{\Psi} - 2\frac{dz}{z}\right),
$$
where $\Psi (x,y) = (y^2+x^5)(y^2+2x^5)^2$,
turns out to be dicritical. It is known that if dicritical components are non compact, or if the foliation induced on the compact dicritical components has a meromorphic first integral \cite{RR}, then the foliation has a separatrix. In this example, we are in the first situation. It deserves a closer study to verify if foliations in the conditions of the statement fall into some of these two situations.
\end{nota}

\section{The quasi-homogeneous case}
A polynomial $P(x_1,\ldots ,x_n)= \sum_{I}a_{i_1,\ldots
,i_n}x_1^{i_1}\cdots x_n^{i_n}\in \CC [x_1,\ldots ,x_n]$ is said
to be $(\alpha_1,\ldots , \alpha_n)$-quasi-homogeneous of degree
$d$ if $a_{i_1,\ldots i_n}\neq 0$ implies $\langle
\mathbf{\alpha}, I\rangle= \sum_{k=1}^n \alpha_ki_k =d$.
Equivalently, if the vector field $X=\frac{1}{d} \sum_{k=1}^n
\alpha_k x_k \frac{\partial}{\partial x_k}$ verifies $X(P)=P$.

More generally, a germ of analytic function $f(x_1,\ldots
,x_n)\in \CC \{ x_1,\ldots ,x_n\}$ is called quasi-homogeneous if
there exists a biholomorphism $\Phi : (\CC^n,0)\rightarrow
(\CC^n,0)$ such that $f\circ \Phi$ is a quasi-homogeneous
polynomial. It is well-known (cf. \cite{Saito71}) that the
following conditions are equivalent:
\be
\item $f$ is quasi-homogeneous.
\item There exist a vector field $X$ such that  $X(f)=f$.
\item $f\in Jac (f)$, ideal generated by the partial derivatives
of $f$.
\ee

Let us particularize Theorem \ref{separatriz} to the case when
$z^2+\varphi (\bx )$ is quasi-homogeneous. This means that there
exists a vector field ${\mathcal X}=\sum_{i=1}^n X_i\frac{\partial}{\partial x_i}+ Z \frac{\partial}{\partial z}$
such that  ${\mathcal X}(z^2+\varphi (\bx ))= z^2+\varphi (\bx )$. If, in this
expression, we put $z=0$, we see that $\sum_{i=1}^n X_i (\bx ,0)\varphi_{x_i}=\varphi$, so $\varphi$ is
quasi-homogeneous. In \cite{Cerveau-Mattei}, the following
result is shown:
\begin{lema}
If $\varphi (\bx )$ is quasi-homogeneous and $u(\bx )$ is a unit
with $u(\bo )=1$, the there exists a biholomorphism $\Phi$ such
that $\varphi\circ \Phi =u\varphi$.
\end{lema}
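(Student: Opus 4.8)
The plan is to produce the biholomorphism $\Phi$ \emph{explicitly}, exploiting the weighted scaling action attached to the quasi-homogeneity, rather than running an abstract homotopy argument. First I would reduce to the case in which $\varphi$ is genuinely a quasi-homogeneous \emph{polynomial}. By definition there is a biholomorphism $\Psi:(\CC^n,\bo)\to(\CC^n,\bo)$ with $\psi:=\varphi\circ\Psi$ a quasi-homogeneous polynomial, of weights $(\alpha_1,\ldots,\alpha_n)$ and degree $d$. Setting $\tilde u:=u\circ\Psi$ (a unit with $\tilde u(\bo)=1$), it suffices to find $\tilde\Phi$ with $\psi\circ\tilde\Phi=\tilde u\,\psi$: then $\Phi:=\Psi\circ\tilde\Phi\circ\Psi^{-1}$ satisfies $\varphi\circ\Phi=u\varphi$, as a direct substitution shows. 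Hence I assume henceforth that $\varphi$ itself is a quasi-homogeneous polynomial. Then, using condition (2) of the equivalences recalled above, there is a radial vector field $R=\sum_{i=1}^n\alpha_i x_i\partial_{x_i}$ with $R\varphi=d\varphi$, and integrating this along the flow of $R$ gives the scaling identity
\[
\varphi\bigl(e^{\alpha_1 s}x_1,\ldots,e^{\alpha_n s}x_n\bigr)=e^{ds}\,\varphi(\bx),\qquad s\in\CC.
\]

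Next I would build $\Phi$ directly. Since $u(\bo)=1$, the principal branch of $\log u$ is a well-defined holomorphic germ vanishing at $\bo$, so $s(\bx):=\tfrac{1}{d}\log u(\bx)$ is holomorphic with $s(\bo)=0$, and each $e^{\alpha_i s}=u^{\alpha_i/d}$ is a well-defined holomorphic unit equal to $1$ at the origin. I set
\[
\Phi(\bx):=\bigl(e^{\alpha_1 s(\bx)}x_1,\ldots,e^{\alpha_n s(\bx)}x_n\bigr)=\bigl(u(\bx)^{\alpha_1/d}x_1,\ldots,u(\bx)^{\alpha_n/d}x_n\bigr).
\]
Geometrically this is the flow of $R$ run for the position-dependent time $s(\bx)$; the key point is that evaluating $\varphi$ along a radial orbit multiplies it by the scalar factor dictated by the identity above. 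Plugging $s=s(\bx)$ into that identity yields
\[
\varphi\circ\Phi(\bx)=e^{d\,s(\bx)}\varphi(\bx)=e^{\log u(\bx)}\varphi(\bx)=u(\bx)\,\varphi(\bx),
\]
which is exactly the desired relation.

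It then remains to verify that $\Phi$ is a biholomorphism germ fixing $\bo$. Each component $u^{\alpha_i/d}x_i$ is holomorphic and vanishes at $\bo$; differentiating, the term coming from $\partial_{x_j}\!\bigl(u^{\alpha_i/d}\bigr)$ carries the factor $x_i$ and hence drops out at the origin, leaving $D\Phi|_{\bo}=\mathrm{Id}$, so the inverse function theorem gives that $\Phi$ is a local biholomorphism. The only genuinely delicate point — the step I would treat with care — is the single-valuedness of the powers $u^{\alpha_i/d}$ when the weights are not integers: this is precisely where the hypothesis $u(\bo)=1$ is used, guaranteeing that $u$ maps a small polydisc into a simply connected neighbourhood of $1$ avoiding $0$, so that the principal logarithm, and thus each $u^{\alpha_i/d}$, is well-defined and holomorphic. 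I expect no essential difficulty beyond this bookkeeping; an alternative, if one prefers to avoid the explicit powers, would be the homotopy (path) method applied to $\varphi_t=(1+t(u-1))\varphi$, solving the resulting cohomological equation via the relation $\varphi\in\mathrm{Jac}(\varphi)$ of condition (3), but the explicit scaling map above is shorter and self-contained.
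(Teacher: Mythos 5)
Your proof is correct. Note that the paper itself does not prove this lemma: it is quoted as a known result from Cerveau--Mattei \cite{Cerveau-Mattei}, so there is no in-paper argument to compare against; your contribution is precisely to supply a self-contained proof. The two steps both check out: the conjugation $\Phi:=\Psi\circ\tilde\Phi\circ\Psi^{-1}$ does reduce to the polynomial case, since $\varphi\circ\Phi=(\tilde u\,\psi)\circ\Psi^{-1}=u\varphi$; and in the polynomial case the identity $\varphi\circ\Phi=u\varphi$ for $\Phi(\bx)=\bigl(u^{\alpha_1/d}x_1,\ldots,u^{\alpha_n/d}x_n\bigr)$ is immediate monomial by monomial, because each monomial $x^I$ with $a_I\neq 0$ picks up the factor $u^{\langle\alpha,I\rangle/d}=u$. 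The two delicate points are also handled properly: single-valuedness of the powers $u^{\alpha_i/d}$ follows from $u(\bo)=1$ via the principal logarithm (this is exactly where that hypothesis enters, consistent with the paper's remark that for general units one only gets $\varphi\circ\Phi=c\,\varphi$ up to the constant $c=u(\bo)$), and $D\Phi|_{\bo}=\mathrm{Id}$ guarantees $\Phi$ is a germ of biholomorphism. Your closing observation that one could instead run a path-method argument, solving the infinitesimal equation by means of $\varphi\in\mathrm{Jac}(\varphi)$, describes the more traditional route in the spirit of Cerveau--Mattei; the explicit weighted rescaling you use is shorter and avoids the cohomological equation altogether.
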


Let us observe that the condition $u(\bo )=1$ is not necessary: if
$\varphi$ is quasi-homogeneous and $c\in \CC\setminus \{ 0\}$,
there exists a biholomorphism $\Phi$ such that  $\varphi\circ
\Phi = c\cdot \varphi$.

So, Theorem \ref{separatriz}, in the quasi-homogeneous case,
implies:
\begin{coro}
Let $\FF$ be a  generalized hypersurface, with quasi-homogeneous
separatrix $z^2+\varphi (\bx )=0$. Then, there exist
coordinates such that  a generator of $\FF$ is
$$
d(z^2+\varphi)+ G(\Psi, z)\cdot (z\cdot \Psi)\cdot \left(
2\frac{dz}{z}-\frac{d\varphi}{\varphi}\right),
$$
where $\varphi = \Psi^p$, $\Psi$ not a power, and $G$ a germ of
holomorphic function in two variables
\end{coro}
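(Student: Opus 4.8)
The plan is to start from the conclusion of Theorem~\ref{separatriz} and to absorb the unit $u$ occurring there by means of a fibered change of coordinates, exploiting the rigidity of quasi-homogeneous germs. As already observed, the quasi-homogeneity of $z^2+\varphi (\bx )$ forces $\varphi (\bx )$ to be quasi-homogeneous, so the Lemma of Cerveau--Mattei recalled above is applicable to $\varphi$.

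First I would apply Theorem~\ref{separatriz} to obtain coordinates $(\bx ,z)$ in which a generator of $\FF$ is
$$
\om = d(z^2+\varphi') + G(\Psi ,z)\cdot (z\Psi )\left( 2\frac{dz}{z}-\frac{d\varphi'}{\varphi'}\right) ,\qquad \varphi' = u\varphi = \Psi^r ,
$$
with $u$ a unit and $\Psi$ not a power. Since $\varphi$ is quasi-homogeneous and $u$ is a unit, the Lemma of Cerveau--Mattei, together with the remark following it that handles the value $u(\bo )\neq 1$, provides a biholomorphism $\Phi$ of $(\CC^n,\bo )$ with $\varphi\circ\Phi = u\varphi = \varphi'$, equivalently $\varphi'\circ\Phi^{-1}=\varphi$.

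The key step is then to lift $\Phi$ to the ambient space without touching the $z$-direction, i.e. to use the fibered diffeomorphism $\Theta (\bx ,z)=(\Phi^{-1}(\bx ),z)$, and to pull $\om$ back by it. Because $\Theta$ fixes $z$ and $\Theta^{\ast}$ commutes with exterior differentiation, each piece transforms transparently: $\Theta^{\ast}d(z^2+\varphi')=d(z^2+\varphi'\circ\Phi^{-1})=d(z^2+\varphi )$, the function $\Psi$ becomes $\tilde\Psi :=\Psi\circ\Phi^{-1}$, the composite $G(\Psi ,z)$ becomes $G(\tilde\Psi ,z)$ (here it is essential that $z$ be preserved), and $\frac{d\varphi'}{\varphi'}$ becomes $\frac{d\varphi}{\varphi}$. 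Hence $\Theta^{\ast}\om = d(z^2+\varphi )+G(\tilde\Psi ,z)\,(z\tilde\Psi )\bigl( 2\frac{dz}{z}-\frac{d\varphi}{\varphi}\bigr)$. Moreover, from $\varphi'=\Psi^r$ one gets $\varphi =\varphi'\circ\Phi^{-1}=(\Psi\circ\Phi^{-1})^r=\tilde\Psi^{\,r}$, and $\tilde\Psi$ is still not a power, since composition with the biholomorphism $\Phi^{-1}$ cannot turn a non-power into a power. Renaming $\tilde\Psi$ as $\Psi$ and $r$ as $p$ yields exactly the asserted expression with $\varphi =\Psi^p$.

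The computation above is essentially bookkeeping once the correct objects are in place; the only points requiring care are (a) orienting the change of variables in the right sense --- one must pull back by $\Phi^{-1}$, not by $\Phi$, so that the separatrix function $\varphi'$ is carried back to the prescribed $\varphi$ --- and (b) covering a general unit $u$ rather than one with $u(\bo )=1$, which is precisely the content of the remark following the Cerveau--Mattei Lemma (alternatively, a nonzero constant $u(\bo )$ can be absorbed into $\Psi$ by extracting an $r$-th root, which does not affect the non-power property). Beyond these two checks I expect no genuine obstacle.
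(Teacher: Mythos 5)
Your proof is correct and follows essentially the same route as the paper: apply Theorem~\ref{separatriz}, use the Cerveau--Mattei lemma (with the remark covering units whose value at $\bo$ is not $1$) to produce a biholomorphism carrying $\varphi'$ back to $\varphi$, and pull the $1$-form back by the fibered lift of that biholomorphism fixing $z$. Your write-up is in fact somewhat more careful than the paper's, making explicit the orientation of the change of variables (pulling back by $\Phi^{-1}$) and checking that $\Psi\circ\Phi^{-1}$ remains a non-power with $\varphi=(\Psi\circ\Phi^{-1})^{r}$, points the paper leaves implicit.
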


\begin{proof}
By Theorem \ref{separatriz}, the foliation is defined by
$$
d(z^2+\varphi')+ G(\Psi, z)\cdot (z\cdot \Psi)\cdot \left(
2\frac{dz}{z}-\frac{d\varphi'}{\varphi'}\right),
$$
where $\varphi$ and $\varphi'$ differ in a unit factor. A
biholomorphism $\Phi$ exists such that $\varphi'\circ
\Phi=\varphi$. Applying this change of variables we obtain
$$
d(z^2+\varphi)+ G(\Psi\circ \Phi, z)+ (z\cdot \Psi\circ
\Phi)\cdot \left( 2\frac{dz}{z}- \frac{d\varphi}{\varphi}\right),
$$
and the result follows.
\end{proof}

\section{A condition for generalized surfaces}
In previous sections we established a pre-normal form for a
generator of a codimension one holomorphic foliation having
$z^2+\varphi (\bx )$ as a separatrix, but this foliation may
eventually be dicritical, or have more separatrices. Let us
briefly return to dimension two. According to \cite{Loray99}, a
1-form having $y^p-x^q$ as a separatrix has the form
$$
\omega =d( y^p-x^q)+\Delta (x,y) (pxdy-qy dx),
$$
where $\Delta (x,y)\in \CC\{ x,y\} $. If $y^p-x^q$ is a
irreducible curve, let $c$ be the conductor of its semigroup.
Then, if $\nu_{(p,q)} (\Delta(x,y) )>c$, the reduction of the
singularities of $\omega$ agrees with the reduction of
$y^p-x^q=0$. Here, $\nu_{(p,q)}\left( \sum_{i,j} a_{ij}
x^i j^j\right) := \min \{ pi+qj;\ a_{ij}\neq 0\}$. In general, if
$\delta = {\rm gcd} (p,q)>1$, define $\nu_{(p,q)}\left(
\sum_{i,j} a_{ij} x^iy^j\right):= \min \left\{ \dfrac{1}{\delta}\cdot
(pi+qj);\ a_{ij}\neq 0\right\}$. Then, the resolution of $\omega$ and
$y^p-x^q$ agrees if\footnote{\cite{Loray99} states that this
condition is \textit{necessary} and sufficient. In fact, it is
not necessary, as examples like $d(y^6-x^3)+axy (6xdy-3ydx)$
show ($a\in \CC^{\ast}$), where a certain arithmetic condition in $a$
is needed, in the style of \cite{Meziani}.}
$\nu_{(p,q)}(\Delta
(x,y))>
\dfrac{(p-1)(q-1)}{\delta}$.

Let us give a version of this result in dimension three, where a
reduction of the singularities is available. In the
quasi-homogeneous case, consider a hypersurface given by the
equation $f(x,y,z)= z^2+\prod_{i=1}^l (y^p-a_ix^q)^{d_i}$, where
$a_i\neq 0$, and $a_i\neq a_j$ if $i\neq j$. Let us observe that
this is note the more general case of quasi-homogeneous surfaces
of our type: such a quasi-homogeneous surface could have an equation $z^2+x^ry^s\prod_{i=1}^l (y^p-a_ix^q)^{d_i}$. We exclude from our considerations the case $r>0$ or $s>0$. Let us briefly describe a reduction of the
singularities of $f(x,y,z)=0$. We will follow the so-called
Weierstrass-Jung method: first of all, the discriminant curve of
the projection $(x,y,z)\mapsto (x,y)$ will  be reduced to normal
crossings, using punctual blow-ups, and then, we will apply usual
procedures in order to desingularize quasi-ordinary surfaces.

Remark that the singular locus of $f(x,y,z)=0$ is
$$
S=\{ \bo\} \cup \{ y^p-a_ix^q=0,\ z=0;\ d_i>1\} .
$$
Denote $r= {\rm gcd} (d_1,d_2,\ldots , d_l)$, $d'_i=d_i/r$,
$d=\sum_{i=1}^l d_i$, $d'_i=\sum_{i=1}^l d'_i$, $\delta
= {\rm gcd} (p,q)$, $\varphi (x,y)= \prod_{i=1}^l
(y^p-a_ix^q)^{d_i}=\Psi (x,y)^r$. The reduction of the
singularities will follow several steps:

\begin{description}
\item[Step I] Blow-up the origin a certain number of times, in
order to reduce to normal crossings the curve $\varphi (x,y)=0$.
Let us see the result in the most interesting chart. For, take
$m$, $n\geq 0$ such that $mp-nq=\delta$, $m$, $n$ minimal (i.e.
$0\leq m<q/\delta$, $0\leq n <p/\delta$). After the
transformation
\begin{align*}
x & \rightarrow x^{p/\delta} y^n \\
y & \rightarrow x^{q/\delta} y^m \\
z & \rightarrow x^{\frac{p+q}{\delta}-1} y^{m+n-1} z,
\end{align*}
the surface is reduced to
$$
x^{2\left( \frac{p+q}{\delta}-1 \right)} y^{2(m+n-1)} \cdot
\left[ z^2+ x^Py^Q \prod_{i=1}^l (y^{\delta} -a_i)^{d_i}\right] ,
$$
where
$$
P= \frac{pq}{\delta}d -2\left( \frac{p+q}{\delta} -1\right),\quad
Q= nqd -2(m+n-1).
$$
The singular locus of the strict transform is given now by the
projective lines given locally by $z=x=0$, $z=y=0$, and the sets of lines $z=0,\
y=a_i^{1/\delta}$.

\smallskip

\item[Step II] Blow-up the lines $z=x=0$, $z=y=0$, a certain
number of times. Let us describe a typical case, namely when $P$,
$Q$ are even numbers. In this case, blow-up $(z=x=0)$ $Q/2$
times, and $(z=y=0)$ $P/2$ times. The equations of this
transform, in a certain appropriate chart, are
\begin{align*}
x & \rightarrow  x\\
y & \rightarrow  y \\
z & \rightarrow  x^{P/2} y^{Q/2} z.
\end{align*}
The strict transform of the surface is now
$$
z^2+\prod_{i=1}^l (y^\delta -a_i)^{d_i}.
$$
\item[Step III] Finally, the lines $z=0$, $y=a_i^{1/\delta}$
with $d_i>1$, must be blown-up, according to the well-known
scheme of the reduction of the singularities of cuspidal
plane curves $z^2+Y^{d_i}=0$.
\end{description}

Let us follow this reduction scheme in the 1-form
$$
\omega =d(z^2+\varphi (x,y))+ G(\Psi, z)\cdot z\Psi\cdot
\left( \frac{d\varphi}{\varphi} - 2 \frac{dz}{z}\right),
$$
with previous notations.

After Step I, the inverse image of $\omega$ is
\begin{equation*}
\begin{split}
x^{2\left( \frac{p+q}{\delta}-1\right)} y^{2(m+n-1)-1}\cdot
\left[ (z^2+x^Py^Q h^r)\omega_1 +xy d(z^2+x^Py^Qh^r) +
\right. \\
\left. +x^a y^b zhG_1\cdot \left( P\frac{dx}{x}+ Q\frac{dy}{y}-2
\frac{dz}{z} + r \frac{dh}{h}\right) \right] ,
\end{split}
\end{equation*}
where:
\begin{align*}
\omega_1 & = 2\left( \frac{p+q}{\delta}-1 \right) ydx +2(m+n-1)
xdy, \\
h(y) & = \prod_{i=1}^l (y^\delta -a_i)^{d'_i}, \\
P & = \frac{pq}{\delta} d -2 \left( \frac{p+q}{\delta}-1\right) , \\
Q & = nqd -2(m+n-1),\\
a & = \frac{pq}{\delta}d' - \left( \frac{p+q}{\delta}- 1\right)
+1,\\
b & = nqd'-(m+n-1)+1.
\end{align*}

Denote $\Omega$ the strict transform of $\omega$, i.e., the
result of dividing by $x^{2\left( \frac{p+q}{\delta} -1\right)
-1} y^{2(m+n-1)-1}$. In Step II, it is necessary to blow-up
several projective lines. Assume, as before, that $P$, $Q$ are even
numbers. In the chart we are working, blow up $P/2$ times the
line $(z=y=0)$, and $Q/2$ times the line $(z=x=0)$. The inverse
image of $\Omega$ is, then,
$$
x^Py^Q\cdot \left[ (z^2+h^r)\cdot \omega_2 + xyd(z^2+h^r)
+ x^{a-\frac{P}{2}} y^{b-\frac{Q}{2}} zhG_2 \left(
-2\frac{dz}{z} + r \frac{dh}{h}\right)\right],
$$
where
$$
\omega_2= \frac{pq}{\delta} d\cdot y dx+nqd\cdot xdy.
$$
The singularities not still reduced are, in these coordinates,
the lines $z=0$, $y^\delta =a_i$, for every $i$ such that
$d_i>1$ (in particular, for all $i$ if $r>0$). Take $\xi$ with
$\xi^\delta =a_i$, and make a translation $y\rightarrow y+\xi$.
Denote $h(y+\xi)= y^{d'_i} H_i (y)$, $H_i (0)\neq 0$. In these
new coordinates, the 1-form turns out to have the expression
\begin{equation*}
\begin{split}
(z^2+y^{d_i} H_i^r) \omega_3 +x(y+\xi) d(z^2 +y^{d_i}H_i^r) +
x^{a-\frac{P}{2}} (y+\xi)^{b-\frac{Q}{2}} zy^{d'_i}H_iG_3 \cdot \\
\left(-2\frac{dz}{z}+ d_i \frac{dy}{y} + r
\frac{dH_i}{H_i}\right) .
\end{split}
\end{equation*}

Assume that $d_i$ is even (the other cases are treated
similarly), and blow up $(z=x=0)$ $d_i/2$ times. The equations
are $z\rightarrow y^{d_i/2} z$, and the final result,
\begin{equation*}
\begin{split}
y^{d_i-1}\cdot \left[ (z^2+H_i^r) \omega_4 +xy (y+\xi)
d(z^2+H_i^r) + x^{a-\frac{P}{2}} (y+\xi)^{b-\frac{Q}{2}}
y^{d'_i-\frac{d_i}{2}+1}\cdot \right. \\
\left. z H_iG_4 \left( -2 \frac{dz}{z} + r
\frac{dH_i}{H_i}\right) \right] ,
\end{split}
\end{equation*}
where
$$
\omega_4 =\frac{pq}{\delta} d (y+\xi) ydx + (nqd\cdot y + d_i
(y+\xi))xdy.
$$
In all these expressions, we denoted $G_1, G_2, G_3, G_4$ the
total transforms of $G(\Psi , z)$ by the succesive
transformations. If $G(\Psi , z)= \sum G_{\alpha \beta}
\Psi^{\alpha} z^{\beta}$, the term $\Psi^{\alpha} z^\beta$ is
transformed in
$$
\left(x^{\frac{pq}{\delta} d'} (y+\xi)^{nqd'} y^{d'_i}
H_i\right)^{\alpha} \cdot \left( x^{\frac12 \frac{pq}{\delta} d}
(y+\xi)^{\frac12 nqd} y^{\frac{d_i}{2}} z
\right)^{\beta}.
$$
In order that the powers that appear are positive, it is
necessary that
\begin{align*}
\frac{pq}{\delta} d' \left(1-\frac{r}{2} \right)  +1 +
\frac{pq}{\delta} d' \left( \alpha + \frac{r}{2} \beta \right) & \geq
0 \\
nqd' \left(1-\frac{r}{2}\right)  +1+nqd' \left( \alpha +
\frac{r}{2} \beta \right) & \geq 0 \\
d'_i \left( 1-\frac{r}{2}\right)   +1 + d'_i \left( \alpha +
\frac{r}{2} \beta \right) & \geq 0.
\end{align*}

These conditions are satisfied if $2\alpha +\beta\geq r-2$. This
condition could be slightly relaxed, including additional
arithmetical conditions as in the two-dimensional case, but we
will not enter into details. As in the two-dimensional case, the effect in the other singular points of the total transform of the foliation is smaller, so, no new restrictions appear.

The cases $P$ even and $Q$ odd, or $P$ odd and $Q$ even are
treated similarly. It is slightly different the case $P$ and $Q$
odd numbers, where it is needed to perform punctual blow-ups at a time of the reduction process (see \cite{FM}), but the final result turns out to be the same. Precise details of the computations are omitted, and will appear in the PhD Thesis of the third author \cite{Hernan}. Collecting previous computations, we
have shown the following result.
\begin{teorema} \label{superficie-generalizada}
Consider a codimension one germ of holomorphic foliation in
$\CC^3$, generated by an integrable 1-form
$$
\omega= d(z^2+\varphi (x,y)) + G(\Psi, z) \cdot (z \Psi ) \left(
\frac{d\varphi}{\varphi} - 2 \frac{dz}{z}\right) ,
$$
where $\varphi (x,y)= \prod_{i=1}^l (y^p-a_i x^q)^{d_i}$, $p$,
$q\geq 2$, $a_i\neq 0$, $a_i\neq a_j$ if $i\neq j$, $r= {\rm gcd}
(d_1,d_2,\ldots , d_l)$, $d'_i= d_i/r$, $\Psi (x,y)=
\prod_{i=1}^l (y^p-a_i x^q)^{d'_i}$, and $G(\Psi, z)=
\sum_{\alpha, \beta} G_{\alpha, \beta} \Psi^{\alpha} z^\beta$ is
a holomorphic function in two variables. Denote $\nu_{(2,r)}(
\psi^{\alpha} z^{\beta}):= (2\alpha+ r\beta)/{\rm gcd} (2,r)$, and $\nu_{(2,r)}
(G)= \min \{ \nu_{(2,r)} (\Psi^{\alpha} z^\beta);\ G_{\alpha
\beta} \neq 0\}$.

Then, if $\nu_{(2,r)} (G)\geq (r-2)/{\rm gcd} (2,r)$, the foliation is a
generalized surface.
\end{teorema}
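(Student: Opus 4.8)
The plan is to establish directly the two defining properties of a generalized surface — non-dicriticalness and the absence of saddle-nodes after reduction — by following the explicit Weierstrass--Jung reduction of the quasi-homogeneous separatrix $z^2+\varphi(x,y)=0$ through Steps I--III, and tracking the transform of $\omega$ in the distinguished chart at each stage. The guiding principle is that $\omega$ is a perturbation of the exact form $d(z^2+\varphi)$, whose foliation is the pencil of levels of $z^2+\varphi$; the reduction of this closed form coincides with the embedded resolution of the separatrix, so it suffices to show that the correction term $G(\Psi,z)\cdot z\Psi\cdot(d\varphi/\varphi-2\,dz/z)$ contributes, after each blow-up, only a multiple of the relevant exceptional monomial. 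If so, the strict transform of $\omega$ equals that of $d(z^2+\varphi)$ plus a term vanishing along the exceptional divisor, whence every divisor component is invariant (non-dicritical) and the singular points of the transform are governed by $d(z^2+\varphi)$ alone.

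First I would carry out Step I (the toric blow-up flattening $\varphi=0$ to normal crossings) and read off that the total transform factors as an exceptional monomial times a strict transform of the stated shape; the same bookkeeping is repeated in Steps II and III. The essential computation at each step is the exponent of the total transform of a monomial $\Psi^\alpha z^\beta$ occurring in $G$: after the full reduction such a term becomes a monomial in $x$, $y+\xi$, $y$, $z$, and requiring its exponents in the exceptional variables to be nonnegative yields exactly
\begin{align*}
\frac{pq}{\delta} d' \left(1-\frac{r}{2}\right) + 1 + \frac{pq}{\delta} d' \left(\alpha + \frac{r}{2}\beta\right) &\geq 0,\\
nqd' \left(1-\frac{r}{2}\right) + 1 + nqd' \left(\alpha + \frac{r}{2}\beta\right) &\geq 0,\\
d'_i \left(1-\frac{r}{2}\right) + 1 + d'_i \left(\alpha + \frac{r}{2}\beta\right) &\geq 0.
\end{align*}
Since the coefficients $\frac{pq}{\delta}d'$, $nqd'$, $d'_i$ are positive, each of these reduces to the single inequality $\left(1-\frac{r}{2}\right)+\left(\alpha+\frac{r}{2}\beta\right)\geq 0$, that is $2\alpha+r\beta\geq r-2$, which is precisely the hypothesis $\nu_{(2,r)}(G)\geq (r-2)/{\rm gcd}(2,r)$. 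Under this hypothesis the correction stays divisible by the exceptional factor, establishing invariance of the divisor throughout the reduction.

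To conclude I would verify that the singularities reached at the end of Step III are simple and, in particular, carry no saddle-node. Here the reduction is modeled on that of the plane cusp $z^2+Y^{d_i}=0$, and in the final chart $\omega$ acquires the displayed expression whose principal part is controlled by $\omega_4$ together with the factor $(z^2+H_i^r)$; because the perturbation has been absorbed into the exceptional monomial, the reduced germs coincide with the simple, non-saddle-node singularities produced by resolving the separatrix, and at the corners the Lemma characterizing simple points by $n$ smooth transversal separatrices applies. The main obstacle is precisely this last verification: one must treat the parity cases ($P,Q$ even or odd, and $d_i$ even or odd) separately, since when $P$ and $Q$ are both odd an extra punctual blow-up is required along the process (as in \cite{FM}). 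Checking that in each such branch the exponent inequalities are still implied by $2\alpha+r\beta\geq r-2$, and that the resulting reduced singularities remain simple, is the genuinely delicate part, even though the individual chart computations are routine.
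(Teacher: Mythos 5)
Your proposal follows essentially the same route as the paper's own proof: the Weierstrass--Jung reduction of $z^2+\varphi=0$ in Steps I--III, tracking the total transform of each monomial $\Psi^\alpha z^\beta$ of $G$ through the distinguished charts, and arriving at exactly the three exponent inequalities (with coefficients $\frac{pq}{\delta}d'$, $nqd'$, $d'_i$) that the paper derives, all reduced to the sufficient condition $2\alpha+r\beta\geq r-2$. Even the delicate points you flag --- the parity cases for $P$, $Q$ and $d_i$, the extra punctual blow-up when both are odd, and the final check that the reduced singularities are simple --- are precisely the ones the paper treats summarily (deferring details to the third author's thesis), so your plan is a faithful match of the published argument.
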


\section{Acknowledgments}
The authors want to thank the Pontificia Universidad Cat\'{o}lica del Per\'{u} and the Universidad de Valladolid for their hospitality during the visits while preparing this paper.

\end{document}